\def\({\left(}
\def\]{\right]}
\def\[{\left[}
\def\){\right)}
\newtheorem{thm}{Theorem}[section]
\newtheorem{lem}[thm]{Lemma}
\newtheorem{rem}[thm]{Remark}
\newtheorem{defn}[thm]{Definition}
\newtheorem{exm}[thm]{Example}
\def\P{{\mathbb P}}
\def\E{{\mathbb E}}
\newcommand{\disp}{\displaystyle}
\newcommand{\bea}{$$\begin{array}{ll}}
	\newcommand{\eea}{\end{array}$$}
\newcommand{\bed}{\begin{displaymath}}
	\newcommand{\eed}{\end{displaymath}}
\newcommand{\ad}{&\!\!\!\disp}
\newcommand{\aad}{&\disp}
\newcommand{\barray}{\begin{array}{ll}}
	\newcommand{\earray}{\end{array}}
\newcommand{\beq}[1]{\begin{equation} \label{#1}}
	\newcommand{\eeq}{\end{equation}}
\newcommand{\bedd}{\bed\begin{array}{l}}
	\newcommand{\eedd}{\end{array}\eed}
\newcommand{\al}{\alpha}
\newcommand{\e}{\varepsilon}
\newcommand{\nd}{\noindent}
\newcommand{\M}{{\cal{M}}}
\newcommand{\one}{{1}}
\newcommand{\wdt}{\widetilde}
\newcommand{\R}{\mathbb R}
\newcommand{\lbar}{\overline}
\def\one{{\hbox{1{\kern -0.35em}1}}}
\def\ka{\kappa}
\def\({\left(}
\def\]{\right]}
\def\[{\left[}
\def\){\right)}
\def\one{{\hbox{1{\kern -0.35em}1}}}
\makeatletter \@addtoreset{equation}{section}
\def\N{\mathbb N}
\begin{document}

\title{Mean-square exponential stability of exact and numerical solutions for neutral stochastic delay differential equations with Markovian switching}

\author{
Jina Yang\thanks{ Department of Biostatistics, Brown University, 121 South Main Street, 
Providence, Rhode Island, USA 02912, jina\_yang@brown.edu  } \and 
Ky Quan Tran\thanks{Corresponding author, Department
	of Applied Mathematics and Statistics,
	State University of New York - Korea, Yeonsu-Gu, Incheon, Korea 21985, ky.tran@stonybrook.edu } 
}

\date{}

\maketitle

\begin{abstract}
This paper investigates the mean-square exponential stability of neutral stochastic delay differential  equations (NSDDEs) with Markovian switching. The analysis addresses the complexities arising from the interaction between the neutral term, time-varying delays, and structural changes governed by a continuous-time Markov chain. We establish novel and practical criteria for the mean-square exponential stability of both the underlying system and its numerical approximations via the Euler-Maruyama method. Furthermore, we prove that the numerical scheme can reproduce the exponential decay rate of the true solution with arbitrary accuracy, provided the step size is sufficiently small. The theoretical results are supported by a numerical example that illustrates their effectiveness.

\vskip 0.3 true in \nd{\bf Key words.} neutral stochastic delay differential equations; mean-square exponential stability; time-dependent delay; Markovian switching; Euler-Maruyama approximations

\vskip 0.3 true in \nd{\bf Mathematics subject classification.}  60H10, 34K50, 65C30

\end{abstract}

\bigskip

\bigskip

\newpage

\setlength{\baselineskip}{0.25in}

\section{Introduction}\label{sec:int}

This work aims to investigate the mean-square exponential stability of a class of neutral stochastic delay differential equations (NSDDEs) with Markovian switching. Our motivation stems from the need to model complex real-world systems where the current state depends on past events. Such stochastic processes, which incorporate a neutral term, a time-varying delay, and white noise, are widely applicable in many disciplines; see \cite{Mao2007,Zhu} and the references therein.

It has been recognized that stochastic processes incorporating time-dependent delays are increasingly vital in fields ranging from biology to engineering \cite{K2013,N2003}. Unlike constant delays, time-dependent delays capture the dynamic nature of these systems. Furthermore, as presented in \cite{Mao2006} and \cite{Zhu}, the inclusion of Markovian switching is essential to describe random environmental influences. What happens if the environment changes randomly? The asymptotic behavior becomes significantly more complex than in a fixed environment. In particular, there are examples (see \cite{Zong2014}) where a system that is stable in some fixed environments and unstable in others can be stabilized or destabilized purely by choosing suitable switching rates.

The literature on the exponential stability of NSDDEs is rich. By and large, intensive studies of NSDDEs without switching can be found in \cite[Chapter 6]{Mao2007}. There have also been efforts to find criteria for the mean-square exponential stability of NSDDEs with Markovian switching. However, existing works such as \cite{Yuan2015,M2017S,Mao2021,Zhang2019} focus primarily on cases where the delay is either constant or differentiable. Meanwhile, although \cite{Mao2023} treats a class of NSDDEs with non-differentiable time delays, the delay functions are assumed to satisfy certain additional conditions. To the best of our knowledge, there are no explicit criteria for the mean-square exponential stability of NSDDEs with Markovian switching subject to arbitrary time-dependent delays. Thus, a natural question arises: Can we develop an explicit criterion for the mean-square exponential stability of NSDDEs with arbitrary time-dependent delays?

To answer this question, we seek a criterion given in terms of the system coefficients and the generator of the Markovian switching, without posing specific technical conditions on the time-dependent delay functions. This objective motivates the first part of our current paper.

Because of the stochasticity, nonlinearity, and hybrid switching behavior, closed-form solutions for these systems are virtually impossible to obtain. Hence, numerical methods are often the only viable alternative. A wide range of numerical methods have been developed to solve NSDDEs, with considerable attention given to preserving their exponential stability \cite{M2017a,Mao2008,Zong2015,Zhou2013}. For instance, the exponential stability of the Euler–Maruyama (EM) method has been studied in \cite{M2013,Ky3,Zong2016,Zhou2013}. The backward Euler-Maruyama and theta Euler-Maruyama approximations have also been investigated in \cite{Yuan2015,M2014,Mo2017c,Mo2021,Mo2017b,Ky3,Zong2015}, along with studies examining highly nonlinear NSDDEs under specific scenarios \cite{M2013,M2017a,Zhou2015}.

Focusing on the long-time behavior of the numerical solutions, a question naturally arises: Given a stable equation, can the numerical solutions reproduce the stability? This question motivates the second part of our paper. It should be noted that a key limitation in \cite{Yuan2015,M2017S,Zhang2019} is that the time-dependent delay $\delta(t)$ is assumed to be either a constant function or differentiable with $\sup_{t \ge 0} \delta'(t) < 1$. This assumption is also prevalent in other existing work \cite{Yuan2015,Zhu16,Mo2017c,Mo2021,Mo2017b,Zong2015}.

In this paper, our effort is placed on overcoming these limitations on the time-dependent delay $\delta(t)$ by using a novel comparison approach via contradiction. While our approach is motivated by developments in \cite{Ky3,Ky2}, the stability analysis herein is considerably more challenging due to the need to treat Markovian switching.

The rest of the paper is arranged as follows. Section \ref{sec:for} begins with the problem formulation. Section \ref{sec:exact} presents a new criterion for the mean-square exponential stability of the exact solutions. Section \ref{sec:num} focuses on the stability of the EM numerical solutions for NSDDEs. Moreover, an example is presented for illustrative purposes. Finally, Section \ref{sec:con} concludes the paper with a few more remarks.

\paragraph{Notation.} Let $\mathbb{R}_+$ be the set of nonnegative real numbers, $\mathbb{N}$ be the set of positive integers, and $\mathbb{N}_0=\mathbb{N}\cup \{0\}$.  
For two real numbers $a$ and $b$, $a\vee b$ denotes $\max\{a, b\}$. For a real number $u$,
$\lfloor u \rfloor$ is the integer part (or the floor) of $u$.
 For a matrix $A\in \mathbb{R}^{d_1\times d_2}$, $A^\top$ denotes its transpose and $|A| = \sqrt{\rm trace(A^\top A)}$ is its trace norm. 
 For $x=(x_1, \dots, x_d)^\top\in \mathbb{R}^d$, its Euclidean norm is denoted by
$|x|=\big(\sum_{i=1}^d x_i^2\big)^{1/2}$.  For a set $M$, the indicator function of $M$ is denoted by ${\bf 1}_M$.
  For a constant $r>0$, let $C([-r, 0], \mathbb{R}^d)$  be the Banach space of all continuous functions on $[-r, 0]$, endowed with the norm $\|\phi\|= \max_{s\in [-r, 0]} |\phi(s)|$ for $\phi\in C([-r, 0], \mathbb{R}^d)$.  For a $\sigma$-algebra $\mathcal{F}_0$,
let $C^{b}_{\mathcal{F}_0}([-r, 0], \mathbb{R}^d)$ be the family of $\mathcal{F}_0$-measurable bounded $C([-r, 0], \mathbb{R}^d)$-valued random variables.

\section{Formulation}\label{sec:for}

We begin by specifying the probabilistic framework. To facilitate the analysis, we work with a complete probability space $(\Omega, \mathcal{F}, \P, \{\mathcal{F}_t\})$, where the filtration $\{\mathcal{F}_t\}$ satisfies the usual conditions (i.e., it is right-continuous and ${\mathcal F}_0$ contains all $\P$-null sets). Let $w(t)$ be an $m$-dimensional standard Brownian motion. To model the regime switching, we introduce a continuous-time Markov chain $\alpha(t)$ taking values in a finite state space $\M=\{1, 2, \dots, m_0\}$ (where $m_0\in \N$) with generator $Q=(q_{ij})\in \R^{m_0\times m_0}$. We assume that $w(t)$ and $\alpha(t)$ are independent and adapted to $\{\mathcal F_t\}$.

The evolution of the switching process $\alpha(t)$ is governed by the transition probability specification:
\beq{e.main3}
\P\Big(\al(t+\Delta t)=j|\al(t)=i \Big)
= \begin{cases} q_{ij}\Delta t + o(\Delta t) &\ \  \text{if}
\ \  i\not =j,  \\
1 + q_{ii}\Delta t + o(\Delta t) &\ \  \text{if}  \ \  i=j,
\end{cases}
\eeq
where $q_{ij}\ge 0$ for $i\ne j$ and the row sums satisfy $\sum_{j\in \M} q_{ij}=0$ for any $i\in \M$.

Our main interest lies in the dynamics of a  NSDDE. Let $r>0$ be the delay bound. We consider Borel measurable functions $f: \R^d\times \R^d\times \M\to \R^d$, $g: \R^d\times \R^d\times \M\to \R^{d\times m}$, $G: \R^d\times \M\to \R^d$, and a time-dependent delay function $\delta: \R_+\to [0, r]$. The state trajectory $X(t)$ is determined by the following system:
\beq{e.main}
\barray
d \big( X(t) - G(X(t-\delta(t), \al(t)) \big)\ad  = f \big( X(t), X(t-\delta(t)),  \al(t)\big) dt \\
\ad \ \  + g\big( X(t), X(t-\delta(t)),  \al(t) \big)dw(t)  \ \ \text{for} \ \ t\ge 0.
\earray
\eeq
To ensure the system is well-defined, we impose the initial condition $(\xi, i_0)\in C^{b}_{\mathcal{F}_{0}}([-r, 0], \R^d)\times \M$, such that
\beq{e.main2}
X(s) = \xi(s) \ \  \text{for any} \ \  s\in [-r, 0], \ \ \al(0)=i_0.
\eeq

To proceed with the analysis, we require standard regularity conditions on the coefficients. Throughout the paper, we assume that the functions $f(x, y, i)$ and $g(x, y, i)$ satisfy a local Lipschitz condition. That is, for each $n\in \mathbb{N}$, there exists a constant $\wdt K_{n}>0$ such that
\bea
 |f(x, y, i)-f(\lbar x, \lbar y, i)|
\ad +|g(x, y, i)-g(\lbar x, \lbar y, i)|  \le \wdt K_{n}\big( |x-\lbar x|+|y-\lbar y| \big)
\eea
whenever $|x|\vee |y|\le n$, $|\lbar x|\vee |\lbar y|\le n$, and $i\in \M$. Furthermore, we assume $f(0, 0, i)=0\in \R^d$ and $g(0, 0, i)=0\in \R^{d\times m}$ for any $i\in \M$. Regarding the neutral term, we assume $G(0, i)=0\in \R^d$ and that there exists a contraction constant $\ka\in (0, 1)$ satisfying
$$
|G(x, i)-G(y, i)|\le \ka |x-y| \ \ \text{for}  \ \ x, y\in \R^d, \ \ i\in \M.
$$

As a standing assumption, we posit that for each initial condition $(\xi, i_0) \in C^{b}_{\mathcal{F}_0}([-r, 0], \R^d)\times \M$, Eq. \eqref{e.main} admits a unique global solution $X^{\xi, i_0}(\cdot)$ satisfying \eqref{e.main2}. It has been recognized that if $f(\cdot)$ and $g(\cdot)$ satisfy the linear growth condition; that is, there exists $\wdt K_{0}>0$ such that
$$ |f(x, y, i)|
+|g(x, y, i)|  \le \wdt K_{0}\big(1+ |x|+|y| \big) \ \ \text{for} \ \ x, y\in \R^d, \ \ i\in \M,
$$
then the existence and uniqueness of the global solution are guaranteed; see \cite[Theorem 2.2, p. 204]{Mao2007}.

Our ultimate goal is to analyze the stability of these solutions. We adopt the following definition of mean-square exponential stability.

\begin{defn} \
 {\rm
 Eq. \eqref{e.main} is said to be mean-square exponentially stable if there exist constants $K>0$ and $\lambda>0$ such that
$$\E |X^{\xi, i_0}(t)|^2 \le K e^{-\lambda t} \E\|\xi\|^2 \ \  \text{for any}\ \ (\xi, i_0) \in C^{b}_{\mathcal{F}_0}([-r, 0], \R^d)\times \M \ \ \text{and} \ \ t\ge 0.
$$
}
\end{defn}

\section{Stability of the exact solutions} \label{sec:exact}

To proceed with the stability analysis, we require the following assumption.

\begin{itemize}
\item[\rm(H)] There are positive constants $\{p_i\}_{i\in \M}$ 
and nonnegative constants $c_0$, $c_1$, and $c_2$ 
such that
\beq{e.cond}
\barray
\ad 2 \big(x-G(y, i)\big)^\top f(x, y, i)  + |g(x, y, i)|^2 + \sum\limits_{j\in \M} q_{ij} \dfrac{p_j}{p_i} |x-G(y, i)|^2\\
\ad \qquad \le -c_0  |x-G(y, i)|^2 + c_1 |x|^2 + c_2  |y|^2
\earray
\eeq
for any $(x, y, i)\in \R^d\times \R^d\times \M$. 

\end{itemize}

For the sake of notational simplicity, we define
$$
p_{\max} = \max\{p_i: i\in \M\},  \ \ p_{\min} = \min\{p_i: i\in \M\},  \ \ r_p = \dfrac{p_{\max}}{p_{\min}},  
$$
and
$$Z(t)=X(t) - G(X(t-\delta(t), \al(t)) \ \ \text{for} \ \ t\ge 0.$$
To highlight the dependence on the initial condition \eqref{e.main2}, the process $Z(t)$  
 is denoted by $Z^{\xi, i_0}(t)$.

\begin{lem} \label{lem:1}
Assume that there exist constants $T>0$, $K>0$, and $\lambda>0$ such that 
$$\E\big[ p_{\al(t)} |Z^{\xi, i_0}(t)|^2 \big] \le K e^{ - \lambda t} \E \|\xi\|^2 \ \ \text{for any} \ \  (\xi, i_0, t)\in C^{b}_{\mathcal{F}_0}([-r, 0], \R^d) \times \M\times [0, T).$$
Then
$$
 \E \big[ p_{\al(t)}  |X^{\xi, i_0}(t)|^2 \big] \le \dfrac{r_p}{(1-\ka  e^{\lambda r/2} )^2} K e^{-\lambda t} \E \|\xi\|^2
$$
and
$$
 \E \big[ p_{\al(t)}  |X^{\xi, i_0}(t-\delta(t))|^2 \big] \le \dfrac{r_p}{(1-\ka  e^{\lambda r/2} )^2} K e^{-\lambda (t-\delta(t))} \E \|\xi\|^2
$$
for any  $(\xi, i_0, t)\in C^{b}_{\mathcal{F}_0}([-r, 0], \R^d)\times \M\times [0, T).$
\end{lem}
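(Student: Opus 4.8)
The plan is to deduce both estimates from a single Gronwall-type supremum bound for the \emph{unweighted} second moment $m(s):=\E|X^{\xi,i_0}(s)|^2$ (with $m(s):=\E|\xi(s)|^2$ for $s\in[-r,0)$), passing between $p_{\al(\cdot)}$-weighted and unweighted quantities only at the very end via $p_{\min}\le p_{\al(\cdot)}\le p_{\max}$. The feature that drives the argument is that, once the exponential weight $e^{\la s/2}$ is inserted, the neutral map $G$ contributes a genuine contraction with factor $\ka e^{\la r/2}$, which is $<1$ under the (implicit) hypothesis that makes the asserted bound meaningful.

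First I would record the pointwise inequality coming from the neutral structure. Since $G(0,i)=0$ and $|G(x,i)-G(y,i)|\le\ka|x-y|$, the relation $X(s)=Z^{\xi,i_0}(s)+G\big(X(s-\delta(s)),\al(s)\big)$ gives $|X(s)|\le|Z^{\xi,i_0}(s)|+\ka|X(s-\delta(s))|$ for all $s\ge0$, pathwise. Taking $L^2(\Omega)$-norms and using Minkowski's inequality yields
$$\sqrt{m(s)}\ \le\ \big(\E|Z^{\xi,i_0}(s)|^2\big)^{1/2}+\ka\,\sqrt{m(s-\delta(s))}\qquad\text{for }s\ge0,$$
and the hypothesis together with $p_{\al(s)}\ge p_{\min}$ gives $\E|Z^{\xi,i_0}(s)|^2\le (K/p_{\min})\,e^{-\la s}\,\E\|\xi\|^2$ on $[0,T)$.

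Next I would close the loop on a truncated interval. Fix $T'\in(0,T)$ and put $V:=\sup_{-r\le s\le T'}e^{\la s/2}\sqrt{m(s)}$, which is finite by standard moment estimates for the solution on the compact interval $[-r,T']$. Multiplying the previous display by $e^{\la s/2}$, using $0\le\delta(s)\le r$ to write $e^{\la s/2}\le e^{\la r/2}\,e^{\la(s-\delta(s))/2}$, and bounding $e^{\la(s-\delta(s))/2}\sqrt{m(s-\delta(s))}\le V$ (note $-r\le s-\delta(s)\le T'$), I get, for $s\in[0,T']$,
$$e^{\la s/2}\sqrt{m(s)}\ \le\ \big((K/p_{\min})\,\E\|\xi\|^2\big)^{1/2}+\ka e^{\la r/2}\,V,$$
while for $s\in[-r,0)$ one has $e^{\la s/2}\sqrt{m(s)}\le\big(\E\|\xi\|^2\big)^{1/2}$ trivially. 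Taking the supremum over $s\in[-r,T']$ and using $\ka e^{\la r/2}<1$, I solve for $V$; the leftover term $(\E\|\xi\|^2)^{1/2}$ is dominated by the principal one (seen by testing the hypothesis at $s=0$ against a deterministic constant datum and using $e^{\la r/2}\ge1$), so that $V\le\big(1-\ka e^{\la r/2}\big)^{-1}\big((K/p_{\min})\,\E\|\xi\|^2\big)^{1/2}$. This bound is independent of $T'$, so letting $T'\uparrow T$ and squaring gives $m(s)\le K\,p_{\min}^{-1}\big(1-\ka e^{\la r/2}\big)^{-2}e^{-\la s}\,\E\|\xi\|^2$ for all $s\in[-r,T)$.

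Finally, both assertions follow by multiplying through by $p_{\al(\cdot)}\le p_{\max}$: for $t\in[0,T)$ one gets $\E\big[p_{\al(t)}|X(t)|^2\big]\le p_{\max}\,m(t)\le r_p K\big(1-\ka e^{\la r/2}\big)^{-2}e^{-\la t}\E\|\xi\|^2$, and since $t-\delta(t)\in[-r,T)$ the same computation gives $\E\big[p_{\al(t)}|X(t-\delta(t))|^2\big]\le p_{\max}\,m(t-\delta(t))\le r_p K\big(1-\ka e^{\la r/2}\big)^{-2}e^{-\la(t-\delta(t))}\E\|\xi\|^2$. The main obstacle I anticipate is the middle step: arranging the truncated supremum $V$ so that it may legitimately be solved for (this needs both the finiteness of $V$ and careful handling of the pre-history interval $[-r,0)$, on which $\al$ is not defined), and extracting precisely the contraction constant $\ka e^{\la r/2}$ rather than a weaker one; Steps 1 and 3 are routine bookkeeping with $|G(x,i)|\le\ka|x|$ and $p_{\min}\le p_{\al(\cdot)}\le p_{\max}$.
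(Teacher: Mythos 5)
Your proposal is correct and follows essentially the same route as the paper: both absorb the neutral term through a supremum--contraction argument with factor $\ka e^{\lambda r/2}<1$, pass from the weighted hypothesis to the unweighted bound via $p_{\min}$, and return via $p_{\max}$ to obtain the factor $r_p(1-\ka e^{\lambda r/2})^{-2}$, including the same treatment of $t-\delta(t)<0$. The only differences are cosmetic (Minkowski on $L^2$-norms with weight $e^{\lambda s/2}$ versus the paper's inequality $|x+y|^2\le \frac{1}{1-v}|x|^2+\frac{1}{v}|y|^2$ with $v=\ka e^{\lambda r/2}$), plus your slightly more explicit handling of the pre-history interval and the finiteness of the supremum.
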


  \begin{proof}
To proceed, we recall the H\"older inequality. For any $\e>0$,
$$|x+y|^2 \le (1+\e) \Big(|x|^2 +  {|y|^2 \over \e}\Big) \ \ \text{for} \ \ x, y\in \R^d;$$
see \cite[Lemma 4.1, p. 211]{Mao2007}.
By setting $v\in (0, 1)$ and $\e = \frac{v}{1-v}$, we obtain
\beq{b.6}
|x+y|^2 \le \dfrac{1}{1-v}|x|^2 +  \dfrac{1}{v}|y|^2 \ \ \text{for} \ \  x, y\in \R^d.
\eeq
For the sake of notational simplicity, we write $X(t) = X^{\xi, i_0}(t)$ and $Z(t)=X(t)-G(X(t-\delta(t)), \al(t))$. 
Let $K_1 =K \E\|\xi\|^2$.
In view of \eqref{b.6}, for any $t\in [0, T)$, it follows that
\bea
\ad \big|Z(t) + G(X(t-\delta(t)), \al(t))\big|^2 \le  \dfrac{1}{1-v} |Z(t)|^2 + \dfrac{1}{v} |G(X(t-\delta(t)), \al(t))|^2.
\eea
It follows that
$$|X(t)|^2\le \dfrac{1}{1-v}|Z(t)|^2+ \dfrac{\ka^2}{v}  |X(t-\delta(t))|^{2}.$$
Multiplying by $e^{\lambda t}$ and taking expectations, we arrive at
\bea
e^{\lambda t} \E|X(t)|^2\ad \le \dfrac{1}{1-v} e^{\lambda t} \E|Z(t)|^2  +  \dfrac{ \ka^2}{v} e^{\lambda t}\E|X(t-\delta(t))|^2\\
\ad \le \dfrac{1}{1-v} e^{\lambda t}\E |Z(t)|^2  +  \dfrac{ \ka^2}{v} e^{-\lambda (t-\delta(t))+\lambda t} e^{\lambda (t-\delta(t))}\E | X(t-\delta(t))|^2 \\
\ad \le \dfrac{1}{1-v} e^{\lambda t}\E |Z(t)|^2  +  \dfrac{ \ka^2}{v}e^{\lambda \delta(t)} \Big(\sup\limits_{s\in [0, T)}e^{\lambda s}\E|X(s)|^2 \Big)\\
\ad \le  \dfrac{1}{1-v} e^{\lambda t}\E |Z(t)|^2 +
\dfrac{ \ka^2}{v}e^{\lambda r} \Big(\sup\limits_{s\in [0, T)}e^{\lambda s}\E |X(s)|^2 \Big);
\eea
which leads to
\bea
e^{\lambda t} \E |X(t)|^2  \le \dfrac{1}{1-v } e^{\lambda t}\E |Z(t)|^2 +   \dfrac{\ka^2}{v} e^{\lambda r}\Big(\sup\limits_{s\in [0, T)}e^{\lambda s}\E |X(s)|^2  \Big).
\eea
Setting $v = \kappa e^{\lambda r/2}$ and noting that
$$
e^{\lambda t}\E|Z(t)|^2
\le \frac{1}{p_{\min}} e^{\lambda t}\E\big[ p_{\alpha(t)} |Z(t)|^2 \big]
\le \frac{1}{p_{\min}} K_1
$$
yields
\beq{e.KJ}
e^{\lambda t}\E  |X(t)|^2   \le \dfrac{K_1}{( 1-\ka e^{\lambda r/2} )p_{\min} }   +\ka e^{\lambda r/2}\Big(\sup\limits_{s\in [0, T)}e^{\lambda s} \E |X(s)|^2\Big)
\ \ \text{for} \ \ t\in [0, T).
\eeq
Taking the supremum over $t\in [0, T)$ in \eqref{e.KJ} yields
$$ \sup\limits_{t\in [0, T)}e^{\lambda t} \E |X(t)|^2 \le \dfrac{ K_1 }{\big( 1-\ka e^{\lambda r/2}  \big)^2 p_{\min}}.$$
This implies
\beq{e.jk} 
\E |X(t)|^2 \le \dfrac{ K_1 }{\big( 1-\ka e^{\lambda r/2}  \big)^2 p_{\min}} e^{-\lambda t}\ \ \text{for} \ \ t\in [0, T).
\eeq
Consequently,
$$\E\big[ p_{\al(t)} |X(t)|^2 \big]\le p_{\max} \E |X(t)|^2  \le \dfrac{ K_1 p_{\max} }{\big( 1-\ka e^{\lambda r/2}  \big)^2 p_{\min} } e^{-\lambda t}\ \ \text{for} \ \ t\in [0, T);$$
that is,
$$\E\big[ p_{\al(t)} |X(t)|^2 \big] \le \dfrac{r_p K_1}{(1-\ka e^{\lambda r/2})^2}e^{-\lambda t} \ \ \text{for} \ \ t\in [0, T),$$
where $r_p = \dfrac{p_{\max}}{p_{\min}}$. 

For $t\in [0, T)$ with $t-\delta(t)\ge 0$, in view of \eqref{e.jk}, we have
\bea
 \E\big[ p_{\al(t)} |X(t-\delta(t))|^2\big]\ad  \le p_{\max} \E |X(t-\delta(t))|^2  \\
\ad \le 
\dfrac{ r_p K_1}{  (1-\ka e^{-\lambda r/2})^2}  e^{-\lambda (t-\delta(t))}.
\eea
Note that the inequality remains valid if $t-\delta(t)<0.$ Consequently,
$$\E\big[ p_{\al(t)} |X(t-\delta(t))|^2\big] \le \dfrac{r_p K_1}{  (1-\ka e^{-\lambda r/2})^2 }  e^{-\lambda (t-\delta(t))} \ \ \text{for} \ \ t\in [0, T).$$
The conclusion follows.
\end{proof}

 We are now in a position to present a criterion for the mean-square exponential stability of Eq. \eqref{e.main}.

\begin{thm} \label{thm:1}
Suppose that assumption {\rm (H)} is satisfied. In addition, assume that
\beq{ee.3}
-c_0 +\dfrac{r_p}{(1-\ka)^2}(c_1+ c_2)<0.
\eeq
Then Eq. \eqref{e.main} is mean-square exponentially stable. Specifically, for any $(\xi, i_0)\in C^{b}_{\mathcal{F}_0}([-r, 0], \R^d)\times \M$, $\E |X^{\xi, i_0}(t)|^2$ exponentially decays
with a rate no less than $\lambda_0$, which is the unique positive root in $(0, -\frac{2}{r}\ln \ka)$ of 
\beq{ee.4}\notag
\barray
\ad -c_0+  \dfrac{r_p}{ (1-\ka  e^{\lambda_0 r/2})^2}(c_1+   c_2 e^{\lambda_0 r}) + \lambda_0=0.
\earray
\eeq
\end{thm}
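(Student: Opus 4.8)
The plan is to apply an It\^o/generator estimate to the Lyapunov-type functional $p_{\al(t)}|Z(t)|^2$ and then run a contradiction argument comparing the exponential decay of $\E[p_{\al(t)}|Z(t)|^2]$ against the candidate rate $\la_0$, closing the loop via \lemref{lem:1}. First I would verify that \eqref{ee.3} guarantees that the function
$$
\Phi(\la) = -c_0 + \dfrac{r_p}{(1-\ka e^{\la r/2})^2}(c_1 + c_2 e^{\la r}) + \la
$$
has a unique root $\la_0$ in $(0,-\tfrac{2}{r}\ln\ka)$: at $\la = 0$ it equals $-c_0 + \tfrac{r_p}{(1-\ka)^2}(c_1+c_2) < 0$, while as $\la \uparrow -\tfrac{2}{r}\ln\ka$ the denominator $(1-\ka e^{\la r/2})^2 \to 0^+$ so $\Phi(\la)\to +\infty$; continuity and (after checking) monotonicity of $\Phi$ on that interval then give existence and uniqueness. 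Fix any $\la \in (0,\la_0)$; it suffices to show $\E[p_{\al(t)}|Z(t)|^2]$ decays at rate $\la$, and then let $\la \uparrow \la_0$.

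The core step is an energy estimate for $e^{\la t}\E[p_{\al(t)}|Z(t)|^2]$. By the generalized It\^o formula for the pair $(X(t),\al(t))$ (applied to $V(z,i)=p_i|z|^2$), using $dZ(t) = f\,dt + g\,dw$ and the fact that the jump part of $\al(t)$ contributes $\sum_j q_{\al(t)j}p_j|Z(t)|^2$, one gets
$$
d\!\left(e^{\la t}p_{\al(t)}|Z(t)|^2\right) = e^{\la t}\!\left[\la p_{\al(t)}|Z(t)|^2 + p_{\al(t)}\!\Big(2Z(t)^\top f + |g|^2 + \textstyle\sum_{j}q_{\al(t)j}\tfrac{p_j}{p_{\al(t)}}|Z(t)|^2\Big)\right]\!dt + dM(t),
$$
where $M(t)$ is a local martingale and the arguments of $f,g$ are $(X(t),X(t-\delta(t)),\al(t))$. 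Now invoke assumption (H): the bracketed drift-in-$i$ term is $\le -c_0|Z(t)|^2 + c_1|X(t)|^2 + c_2|X(t-\delta(t))|^2$. Taking expectations (a standard localization/stopping-time argument kills $M$) yields, for $t\in[0,T)$,
$$
e^{\la t}\E[p_{\al(t)}|Z(t)|^2] \le p_{\max}\E\|\xi\|^2 + \int_0^t e^{\la s}\E\!\left[p_{\al(s)}\big((\la - c_0)|Z(s)|^2 + c_1|X(s)|^2 + c_2|X(s-\delta(s))|^2\big)\right]ds
$$
(up to harmless constants $p_{\max}/p_{\al(s)}$ bounds on the last two terms, or one simply keeps $p_{\al(s)}$ throughout and uses \lemref{lem:1} which is stated with the weight $p_{\al(t)}$).

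For the contradiction, suppose $K>0$ is large enough that $\E[p_{\al(t)}|Z^{\xi,i_0}(t)|^2]\le Ke^{-\la t}\E\|\xi\|^2$ holds on $[0,T)$ but fails to persist — i.e. define $T^\star$ as the first time the bound is violated (or argue that if it ever fails, there is a first "touching" time). On $[0,T^\star)$ \lemref{lem:1} applies and gives $\E[p_{\al(s)}|X(s)|^2] \le \tfrac{r_p K}{(1-\ka e^{\la r/2})^2}e^{-\la s}\E\|\xi\|^2$ and the analogous bound for $X(s-\delta(s))$ with $e^{-\la(s-\delta(s))} \le e^{\la r}e^{-\la s}$. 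Substituting these into the energy estimate and using $\la - c_0 < 0$ together with the definition of $\Phi$, the right-hand side integrand is controlled by $\Phi(\la)\,e^{-\la s} < 0$ (times positive constants), so $e^{\la t}\E[p_{\al(t)}|Z(t)|^2]$ stays strictly below the claimed envelope on $[0,T^\star]$, contradicting the definition of $T^\star$ (by continuity of $t\mapsto e^{\la t}\E[p_{\al(t)}|Z(t)|^2]$, which follows from continuity of the solution and dominated convergence). Hence the bound holds for all $t\ge 0$; feeding it back through \lemref{lem:1} once more gives $\E|X^{\xi,i_0}(t)|^2 \le \tfrac{r_p K}{(1-\ka e^{\la r/2})^2 p_{\min}}e^{-\la t}\E\|\xi\|^2$, and letting $\la\uparrow\la_0$ yields the stated decay rate.

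**Main obstacle.** The delicate point is making the contradiction/continuation argument rigorous: the natural estimate is self-referential (the bound on $|X|$ used inside the integral is itself a consequence of the bound on $|Z|$ one is trying to prove, via \lemref{lem:1}), so one must carefully set up $T^\star$ as a supremum of times where the inequality holds with a strict or non-strict sign, verify continuity of the relevant expectation in $t$, and ensure the constant $K$ can be chosen (from the initial data on $[-r,0]$ and a short-time bound) so that the inequality is strict at $t=0$ and hence has room to propagate. A secondary technical nuisance is the localization needed to legitimately drop the local-martingale term $M(t)$ when taking expectations, since a priori moment bounds on $X$ on compact time intervals are needed — these follow from the standing well-posedness assumption and the local Lipschitz/linear growth hypotheses, but should be cited (e.g. \cite[Chapter 6]{Mao2007}) rather than reproved.
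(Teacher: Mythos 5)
Your overall skeleton matches the paper's: the analysis of $H(\rho)=-c_0+\frac{r_p}{(1-\ka e^{\rho r/2})^2}(c_1+c_2e^{\rho r})+\rho$ to produce the root $\la_0$, the use of \lemref{lem:1} to convert a hypothesized decay bound on $\E[p_{\al(t)}|Z(t)|^2]$ into bounds on $\E[p_{\al(t)}|X(t)|^2]$ and $\E[p_{\al(t)}|X(t-\delta(t))|^2]$, and a first-violation (touching-time) contradiction. However, there is a genuine gap at the decisive substitution step. After applying It\^o to $e^{\la t}p_{\al(t)}|Z(t)|^2$ and invoking (H), the coefficient of the $|Z(s)|^2$ term in your integrand is $\la-c_0$, which is strictly negative: the identity $H(\la_0)=0$ gives $c_0-\la_0=\frac{r_p}{(1-\ka e^{\la_0 r/2})^2}(c_1+c_2e^{\la_0 r})\ge 0$, and you take $\la<\la_0$, so $\la<c_0$. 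You then claim the integrand is bounded above by a (negative) constant times $H(\la)$ by inserting the hypothesis $\E[p_{\al(s)}|Z(s)|^2]\le Ke^{-\la s}\E\|\xi\|^2$; but multiplying an \emph{upper} bound by the \emph{negative} coefficient $\la-c_0$ reverses the inequality, so this step is invalid. From the hypothesis you may only conclude $(\la-c_0)\E[p_{\al(s)}|Z(s)|^2]\le 0$, which leaves a positive integrand of order $r_p\beta(c_1+c_2e^{\la r})K\E\|\xi\|^2$; its integral grows linearly in $t$, and no contradiction is obtained at $T^\star$.

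The paper avoids precisely this by discounting with an auxiliary rate: it applies It\^o to $e^{ct}p_{\al(t)}|Z(t)|^2$ for a fixed $c>c_0$ (not $c=\la$), so the $|Z|^2$ coefficient becomes $c-c_0>0$ and the assumed upper bound on $\E[p_{\al(s)}|Z(s)|^2]$ can be substituted with the correct sign; combined with $H(\la)\le 0$ (that is, $c-c_0+r_p\beta(c_1+c_2e^{\la r})\le c-\la$), the integral term is at most $K_1\big(e^{(c-\la)t_*}-1\big)$, and the strict initial inequality $\E[p_{i_0}|Z(0)|^2]<K_1$ closes the contradiction at $t_*$. Your argument could alternatively be repaired by a comparison argument for $u(t)=e^{\la t}\E[p_{\al(t)}|Z(t)|^2]$ satisfying $u(t)\le u(0)+\int_0^t\big[(\la-c_0)u(s)+C\big]\,ds$, but the integral-form Gronwall lemma does not apply directly with a negative coefficient, so this needs extra care; the paper's choice $c>c_0$ is the cleaner route. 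Two minor points: the paper works with $\la\in(0,\la_0]$ directly (only $H(\la)\le 0$ is used), so your limiting step $\la\uparrow\la_0$ is unnecessary though harmless; and for the touching-time construction the paper only needs right-continuity of $t\mapsto\E[p_{\al(t)}|Z(t)|^2]$, which is what it establishes, rather than continuity.
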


 \begin{proof}
Let $(\xi, i_0) \in C^{b}_{\mathcal{F}_0}([-r, 0], \R^d)\times \M$. Without loss of generality, we assume $\E\|\xi\|^2>0$. 
For the sake of notational simplicity, we write $X(t) = X^{\xi, i_0}(t)$ and $Z(t)=X(t)-G(X(t-\delta(t)), \al(t))$ for any $t\ge 0$. 

  Define 
$$H(\rho) = -c_0+   \dfrac{r_p}{(1-\ka e^{\rho r/2})^{2}} \big(c_1 + c_2 e^{\rho r}\big) + \rho\ \ \text{for} \ \ \rho\in (0, -\frac{2}{r}\ln \ka).$$
Observe that the function $H(\cdot)$ is continuous and strictly increasing on $(0,  -\frac{2}{r}\ln \ka)$.  In view of \eqref{ee.3}, $\lim_{\rho \to 0}H(\rho)<0$. Moreover, it is evident that  $\lim_{\rho \to -\frac{2}{r}\ln \ka} H(\rho)=\infty$. Then on the interval $(0,  -\frac{2}{r}\ln \ka)$,
the equation $H(\rho)=0$ has a unique solution $\rho=\lambda_0$.  Let $\lambda \in (0, \lambda_0]$ and $\beta=(1-\ka e^{\lambda r/2})^{-2}$. Since $H(\cdot)$ is strictly increasing on $(0,  -\frac{2}{r}\ln \ka)$, we have  $H(\lambda)\le H(\lambda_0)=0$. Consequently,
\beq{ee.3b}
- c_0 +  r_p\beta( c_1  +c_2e^{\lambda r} ) \le -\lambda.
\eeq
 Let $K=2(1+\ka^2)\sum_{i\in \M}p_i$ and $K_1= K\E \|\xi\|^2$. Then
\beq{Phi0}\E\big[p_{\al(t)} |Z(t)|^2 \big] < K_1 e^{-\lambda t} \ \ \text{when}\ \  t=0.\eeq
We aim to show that
\beq{ee.8d} \E \big[ p_{\al(t)} |Z(t)|^2\big] < K_1 e^{-\lambda t} \ \ \text{for any} \ \ t\ge 0.\eeq
 Let us consider the real-valued functions $\Phi(t)$ and $\Psi(t)$ defined  by
$$
\Phi(t) = \E \big[ p_{\al(t)} |Z(t)|^2 \big], \ \ 
\Psi(t)= K_1  e^{-\lambda t} \ \ \text{for}\ \ t\ge 0.
$$
Noting that the trajectories of $Z(t)$ are continuous, while the trajectories of $p_{\al(t)}$ are right-continuous, we see that $\Phi(t)$ is right-continuous on the interval $[0, \infty)$. Furthermore, it is clear that $\Psi(t)$ is continuous on $[0, \infty)$. 
Were the statement \eqref{ee.8d} false, by the right-continuity of $\Phi(t)$ and the continuity of $\Psi(t)$,
there would exist $t_*>0$
such that
\beq{ee.9d}\Phi(t)<\Psi(t) \ \ \text{for} \ \  t\in [0, t_*), \ \  \Phi(t_*)\ge  \Psi (t_*).\eeq
Fix a constant $c > c_0$.
Applying the It\^{o} formula to the process $Z(t)$ given by Eq.  \eqref{e.main} and taking expectation of both sides of the resulting equation, we derive
\beq{ee.10e}
\barray
e^{c t_*} \E \big[ p_{\al(t_*)} |Z(t_*)|^2 \big]\ad = \E\big[p_{i_0} |Z(0)|^2\big] + \disp\int_{0}^{t_*} e^{c s}\E \big[{\cal H}(s) \big]ds,
\earray
\eeq
where
\bea
{\cal H}(s)\ad =  c p_{\al(s)} |Z(s)|^2+ \sum\limits_{j\in \M}q_{\al(s) j}p_j \big(X(t)-G(X(s-\delta(s)), \al(s)) \big)   \\
\ad \ \ \qquad+ 2 p_{\al(s)}(Z(s))^\top f\big(X(s), X(s-\delta(s)), \al(s)\big) \\
\ad \ \ \qquad +p_{\al(s)} | g \big(X(s), X(s-\delta(s)), \al(s) \big)|^2\ \ \text{for} \ \  s\in [0, t_*).
\eea 
In view of \eqref{e.cond}, for any $s\in [0, t_*)$, 
\bea
{\cal H}(s)\ad \le (c-c_0) p_{\al(s)}  |Z(s)|^2 + c_1 p_{\al(s)}  |X(s)|^2+ c_2 p_{\al(s)}  |X(s-\delta(s))|^2.
\eea
Hence,
\beq{ee.11e}\notag
\barray
 \E\big[ {\cal H}(s) \big] \ad \le  (c-c_0) \E \big[ p_{\al(s)} |Z(s)|^2 \big] \\
\ad \qquad+ c_1\E\big[  p_{\al(s)} |X(s)|^2 \big]  + c_2\E\big[ p_{\al(s)}   |X(s-\delta(s))|^2\big] \ \ \text{for} \ \  s\in [0, t_*).
 \earray
\eeq
By virtue of Lemma \ref{lem:1}, for any $s\in [0, t_*)$,
 \beq{ee.12x} \notag
 \E \big[ p_{\al(s)} |X(s)|^2 \big]\le r_p\beta K_1  e^{-\lambda s}, \ \ \E \big[ p_{\al(s)} |X(s-\delta(s))|^2 \big] \le r_p\beta K_1  e^{-\lambda (s-\delta(s))}.
\eeq
Hence,
$$
\E\big[ {\cal H} (s) \big]  \le (c-c_0)K_1e^{-\lambda s} + c_1r_p\beta K_1e^{-\lambda s} + c_2 r_p \beta K_1e^{-\lambda s} \ \ \text{for} \ \  s\in [0, t_*).
$$
Combining this with the property of $\lambda$ in \eqref{ee.3b} yields \beq{ee.14e}
\barray
\disp\int_{0}^{t_*} e^{c s} \E\big[ {\cal H} (s) \big] ds  \ad  \le   K_1  \int_{0}^{t_*}e^{(c -\lambda)s} \Big[c - c_0 + c_1r_p\beta +c_2r_p\beta e^{\lambda r}\Big]ds  \\
\ad \le K_1  \int_{0}^{t_*}e^{(c -\lambda)s}  (c-\lambda)ds\\
\ad  = K_1  \big( e^{(c-\lambda)t_*} -1\big).
\earray
\eeq
It follows from   \eqref{ee.10e}  and \eqref{ee.14e} that
\beq{}\notag
\barray
e^{c t_*} \E \big[ p_{\al(t_*)} |Z(t_*)|^2 \big]  \ad \le \E\big[p_{i_0} |Z(0)|^2\big] + K_1  \big( e^{(c-\lambda)t_*} -1\big)\\
\ad <   K_1   e^{(c-\lambda)t_*},
\earray
\eeq
where we have used that $\Phi(0)=\E\big[p_{i_0} |Z(0)|^2\big]<K_1$; see \eqref{Phi0}. Consequently, $$\Phi(t_*) =  \E \big[ p_{\al(t_*)} |Z(t_*)|^2 \big]   < K_1 e^{-\lambda  t_*}=\Psi(t_*).$$
This contradicts the second statement in \eqref{ee.9d}. Consequently, \eqref{ee.8d} is verified.
By Lemma \ref{lem:1}, 
 Eq. \eqref{e.main} is mean-square exponentially stable and the conclusion follows.
  \end{proof}

\section{Stability of EM approximations}\label{sec:num} 

We proceed to construct the EM numerical solutions for Eq. \eqref{e.main} subject to the initial condition \eqref{e.main2}. Let the step size $\e$ be a fraction of $r$; specifically, we set $\e =r/N$ for some positive integer $N>r$. To approximate the switching component, let $\{\al_n\}_{n\ge 0}$ denote the discrete-time Markov chain characterized by the one-step transition probability matrix $e^{\e Q}$ with $\al_0=i_0$. For the detailed method of simulating the chain $\{\al_n\}$, we refer the reader to \cite[p. 112]{Mao2006}.

The application of the EM approximation method to Eq. \eqref{e.main} yields an approximation sequence $\{Y_n\}_{n\ge -N}$. To ensure that the EM approximation sequence is well-defined, we impose the following:
$$Y_{-(N+1)}=\xi(-N\e), \ \ \delta(-\e) = \delta(0), \ \ \al_{-1}=\al_0.$$
 Then
\beq{EM}
\barray
\ad Y_n = \xi (n \e), \ \  -N\le n\le 0,   \\ 
 \ad Y_{n+1} = Y_n +  G( Y_{n - R_{n}}, \al_{n}) - G( Y_{n-1 - R_{n-1}}, \al_{n-1} ) 
+  f \big( Y_n, Y_{n- R_n}, \al_n   \big)\e  
\\ 
\ad \ \ + g \big( Y_n, Y_{ n-R_n} , \al_n  \big) \Delta w_n\ \ \text{for} \ \  n\in \N_0,
 \earray
\eeq
where
\beq{e.wn}  \notag \Delta w_n = w((n+1)\e ) - w(n\e  ),\ \  R_n = \lfloor \delta (n\e)/\e\rfloor. \eeq

We recall that $\lfloor \delta (n\e)/\e\rfloor$ denotes the integer part of $\delta (n\e)/\e$. To explicitly indicate the dependence on the initial conditions $Y_n = \xi(n\e)$ for $-N \le n\le 0$ and $\al_0=i_0$, we denote the sequence $\{Y_n\}$ generated by \eqref{EM} as $\{Y^{\xi, i_0}_n\}$.

To facilitate the stability analysis of the numerical scheme, we first establish the following auxiliary results.

\begin{lem}  \label{lem:XZ}
Let $\lambda\in (0, -\frac{2}{r}\ln \kappa)$ and $\e\in (0, 1)$ be chosen such that $\lambda <-\frac{2\ln \ka}{r+\e}$. Define
$$\beta(\e)=\dfrac{1}{(1-\ka  e^{\lambda (r+\e)/2} )^2}.$$
Let $\{Y_n\}$ denote the EM approximation sequence, and let $Z_n=Y_n - G(Y_{n-1-R_{n-1}}, \al_{n-1})$ and $\lbar p_n = p_{\al_n}$ for $n\in \mathbb{N}_0$. Suppose further that there exist constants $k\in \mathbb{N}$ and $K>1$ such that
\beq{4.4}
\mathbb{E}\big[\lbar p_n |Z_n|^2 \big] \le Ke^{-\lambda n\e}\E\|\xi\|^2 \quad \text{for} \quad n=0, 1, \dots, k.
\eeq
Then the following assertions hold:
\begin{itemize}
\item[\rm (a)] For any $n= 0, 1, \dots, k$, 
\beq{4.4a}\mathbb{E} |Z_n|^2 \le \dfrac{1}{p_{\min}} Ke^{-\lambda n\e}\mathbb{E}\|\xi\|^2.\eeq
\item[\rm(b)]  For any $n=-N, \dots, 0, 1, \dots, k$,
\beq{4.4b}\mathbb{E}|Y_n|^2 \le \dfrac{1}{p_{\min}}\beta(\e) K e^{-\lambda n\e} \E \|\xi\|^2.
\eeq
\item[\rm (c)] For any $n= 0, 1, \dots, k$,
\beq{4.4c}
\mathbb{E}\big[\lbar p_n |Y_n|^2 \big] \le r_p\beta(\e) K e^{-\lambda n\e} \E \|\xi\|^2
\eeq
and
\beq{4.4d} \E \big[ \lbar p_n |Y_{n-1-R_{n-1}}|^2\big]\le r_p e^{\lambda (r+\e)}\beta(\e) Ke^{-\lambda n\e} \E \|\xi\|^2.\eeq
\end{itemize}
 \end{lem}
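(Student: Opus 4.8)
My plan is to establish the three items in turn; (b) is the only substantive step, items (a) and (c) sitting on top of it and on \lemref{lem:1}. Item (a) is immediate: since $\lbar p_n=p_{\al_n}\ge p_{\min}$ pathwise, one has $p_{\min}\E|Z_n|^2\le\E[\lbar p_n|Z_n|^2]$, and \eqref{4.4} then gives \eqref{4.4a}.

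For item (b) I would reproduce the argument of \lemref{lem:1}, with the continuous supremum there replaced by a maximum over the finite index block $\{-(N+1),\dots,k\}$. From $Z_n=Y_n-G(Y_{n-1-R_{n-1}},\al_{n-1})$ and $|G(y,i)|=|G(y,i)-G(0,i)|\le\ka|y|$, the elementary inequality \eqref{b.6} with parameter $v=\ka e^{\lambda(r+\e)/2}$ gives, for every $n\in\{0,\dots,k\}$,
$$|Y_n|^2\le\frac{1}{1-v}|Z_n|^2+\frac{\ka^2}{v}|Y_{n-1-R_{n-1}}|^2 .$$
The hypothesis $\lambda<-\tfrac{2\ln\ka}{r+\e}$ is exactly what forces $v\in(0,1)$, and then $\beta(\e)=(1-v)^{-2}$. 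Multiplying by $e^{\lambda n\e}$, taking expectations, and bounding the first term via item (a) leaves the delayed term; since $R_{n-1}=\lfloor\delta((n-1)\e)/\e\rfloor$ is deterministic with $R_{n-1}\le r/\e=N$,
$$e^{\lambda n\e}\E|Y_{n-1-R_{n-1}}|^2=e^{\lambda(1+R_{n-1})\e}e^{\lambda(n-1-R_{n-1})\e}\E|Y_{n-1-R_{n-1}}|^2\le e^{\lambda(r+\e)}M,$$
where $M=\max_{-(N+1)\le s\le k}e^{\lambda s\e}\E|Y_s|^2$ and the shifted index $n-1-R_{n-1}$ stays in $\{-(N+1),\dots,k\}$. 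The point of this choice of $v$ is that $\tfrac{\ka^2}{v}e^{\lambda(r+\e)}=v$, so $e^{\lambda n\e}\E|Y_n|^2\le\tfrac{K}{(1-v)p_{\min}}\E\|\xi\|^2+vM$ for $n\in\{0,\dots,k\}$; for $n\le 0$ one has $Y_n=\xi(n\e)$ (with $Y_{-(N+1)}=\xi(-N\e)$), so $e^{\lambda n\e}\E|Y_n|^2\le\E\|\xi\|^2$, a contribution absorbed into the final constant. Taking the maximum over all indices then yields $M\le\tfrac{K}{(1-v)p_{\min}}\E\|\xi\|^2+vM$, hence $M\le\tfrac{K}{(1-v)^2p_{\min}}\E\|\xi\|^2=\tfrac{\beta(\e)K}{p_{\min}}\E\|\xi\|^2$, which is \eqref{4.4b}.

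Item (c) then follows from \eqref{4.4b} by the pointwise bound $p_{\al_n}\le p_{\max}$: $\E[\lbar p_n|Y_n|^2]\le p_{\max}\E|Y_n|^2\le r_p\beta(\e)Ke^{-\lambda n\e}\E\|\xi\|^2$ is \eqref{4.4c}, and $\E[\lbar p_n|Y_{n-1-R_{n-1}}|^2]\le p_{\max}\E|Y_{n-1-R_{n-1}}|^2$, into which I would feed \eqref{4.4b} at the deterministic index $n-1-R_{n-1}$ and absorb the shift via $e^{-\lambda(n-1-R_{n-1})\e}\le e^{\lambda(r+\e)}e^{-\lambda n\e}$ to get \eqref{4.4d}.

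The main obstacle I anticipate is purely the bookkeeping around the delayed index: one must confirm that $n-1-R_{n-1}$ never drops below $-(N+1)$, carry the conventions $Y_{-(N+1)}=\xi(-N\e)$, $\al_{-1}=\al_0$, $\delta(-\e)=\delta(0)$ through the boundary step $n=0$, and — the crucial quantitative point — check that shifting from index $n$ back to $n-1-R_{n-1}$ loses an exponential factor of at most $e^{\lambda(r+\e)}$. This is precisely why the enlarged horizon $r+\e$ rather than $r$ appears in $\beta(\e)$ and in the hypothesis on $\lambda$, and why the balancing choice $v=\ka e^{\lambda(r+\e)/2}$ — the one making $\tfrac{\ka^2}{v}e^{\lambda(r+\e)}=v$ — is forced. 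Everything else reduces to routine manipulation of finite sums.
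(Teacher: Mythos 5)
Your proof is correct and follows essentially the paper's route: (a) and (c) are obtained exactly as in the paper (divide by $p_{\min}$, multiply by $p_{\max}$, and use $R_{n-1}\e\le r$ to absorb the shift $e^{\lambda(1+R_{n-1})\e}\le e^{\lambda(r+\e)}$), while for (b) you write out in full the discrete analogue of \lemref{lem:1} --- the balancing choice $v=\ka e^{\lambda(r+\e)/2}$ with $\frac{\ka^2}{v}e^{\lambda(r+\e)}=v$ and a maximum over the finite index block $\{-(N+1),\dots,k\}$ --- which is precisely the argument the paper outsources to \cite[Lemma 3.1]{Ky3}. The only point worth making explicit is your phrase ``absorbed into the final constant'': for the initial indices $-N\le n\le 0$ the bound $e^{\lambda n\e}\E|Y_n|^2\le\E\|\xi\|^2$ is dominated by the claimed right-hand side only when $K\beta(\e)/p_{\min}\ge 1$, which is automatic in the theorem's application (there $K=2(1+\ka^2)\sum_{i\in\M}p_i\ge p_{\min}$) and is implicitly taken for granted in the lemma as stated, so your gloss is at the same level of precision as the paper's.
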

 
 \begin{proof}
(a) The inequality \eqref{4.4a} follows directly from the assumption \eqref{4.4} and the definition of $p_{\min}$.

(b) Regarding (b), subject to the validity of \eqref{4.4a}, the detailed proof of \eqref{4.4b} is provided in \cite[Lemma 3.1]{Ky3}.

(c) We proceed to prove (c). By virtue of \eqref{4.4b}, for any $n=0, 1, \dots, k$, we have
\beq{}\notag
\barray
\E\big[ \lbar p_n |Y_n|^2 \big]\ad \le p_{\max} \E |Y_n|^2 \\
\ad  \le \dfrac{   p_{\max} }{ p_{\min}} \beta(\e) K e^{-\lambda n\e} \mathbb{E}\|\xi\|^2 ;
\earray
\eeq
consequently, \eqref{4.4c} is verified.
 
(d) Finally, for assertion (d), in view of \eqref{4.4b}, it follows that
 \beq{}\notag
 \barray
  \E \big[ \lbar p_n |Y_{n-1-R_{n-1}}|^2\big] \ad \le p_{\max} \E  |Y_{n-1-R_{n-1}}|^2\\
  \ad \le  \dfrac{p_{\max}}{p_{\min}}\beta(\e) K e^{-\lambda (n-1-R_{n-1})\e} \E \|\xi\|^2\\
  \ad \le r_p e^{\lambda (r+\e)}\beta(\e) Ke^{-\lambda n\e} \E \|\xi\|^2.
 \earray
 \eeq
This yields the desired inequality \eqref{4.4d}. Note that in the last step, we have utilized the fact that $R_{n-1}\e\le r$. The proof is thus complete.
\end{proof}

\begin{rem}\label{rem:rem}
{\rm  For any $i, j\in \M$ and $\e\in (0, 1)$,
let $p_{ij}(\e)$ be the $(i, j)$ entry in the matrix $\exp(\e Q)$.
Define
 $$ \|{ Q}\|_{\infty}=\max\{|q_{ij}|: i, j=1, 2, \dots, m_0\}.$$
Note that
$$\exp(\e Q)={\bf I} +\e Q+\e^2 \sum\limits_{k=2}^\infty \dfrac{\e^{k-2} Q^k}{k!},$$
where ${\bf I}$ is the $m_0\times m_0$ identity matrix.
Let $\tau_{ij}(\e)$ be the $(i, j)$ entry in $\sum\limits_{k=2}^\infty \dfrac{\e^{k-2} Q^k}{k!}$.
It has been proved in \cite[Lemma 3.2]{Ky1} that
 \beq{n0}|\tau_{ij}(\e)|\le \exp({m_0 \|Q\|_\infty}) - 1- m_0 \|Q\|_\infty \ \ \text{for any} \ \ i, j \in \M, \ \ \e\in (0, 1).
\eeq
}
\end{rem}

We turn our attention to the mean-square exponential stability of the EM approximations. The following theorem demonstrates that, provided the step size $\e$ is sufficiently small, the EM approximations faithfully reproduce the mean-square exponential stability of the exact solution. Furthermore, it reveals that the numerical scheme is capable of reproducing the exponential decay rate $\lambda_0$ with arbitrary accuracy.

\begin{thm} \label{thm:2}
Suppose that the assumptions of Theorem \ref{thm:1} are satisfied. In addition, assume that there exists a constant $C_0>0$ such that
\beq{e.x}
|f(x, y, i)|^2\le C_0(|x|^2+|y|^2) \ \ \text{for any} \ \ x, y\in \R^d, \ \  i\in \M.
\eeq
Let $\lambda\in (0, \lambda_0)$ and define $\beta = (1-\ka e^{\lambda r/2})^{-2}$. Then there exists a constant $\e_0>0$ such that for any step size $\e\in (0, \e_0]$, the EM approximation sequence $\{Y_n\}$
 is mean-square exponentially stable. Moreover, we have
\beq{d.2}
\limsup\limits_{n\to \infty} \dfrac{1}{n\e } \ln \big( \E  |Y^{\xi, i_0}_n|^2  \big)\le -\lambda \ \ \text{for} \ \ (\xi, i_0)\in C^{b}_{\mathcal{F}_0}([-r, 0], \R^d)\times \M.
\eeq
\end{thm}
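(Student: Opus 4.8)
The plan is to run a discrete counterpart of the contradiction argument in the proof of \thmref{thm:1}, with \lemref{lem:XZ} playing the role that \lemref{lem:1} played there.

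\emph{Setup.} Fix $\lambda\in(0,\lambda_0)$. The function $H$ built in the proof of \thmref{thm:1} is continuous and strictly increasing on $(0,-\tfrac2r\ln\ka)$ with $H(\lambda_0)=0$, hence $H(\lambda)<0$; set $\mu_0:=-\tfrac12 H(\lambda)>0$. Since $\beta(\e)\to\beta$ and $e^{\lambda(r+\e)}\to e^{\lambda r}$ as $\e\downarrow0$ and $\lambda<-\tfrac2r\ln\ka$, fix $\e_1>0$ so that for all $\e\in(0,\e_1]$ one has $\lambda<-\tfrac{2\ln\ka}{r+\e}$ and $-c_0+r_p\beta(\e)(c_1+c_2e^{\lambda(r+\e)})+\lambda\le-\mu_0$. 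With $\lbar p_n=p_{\al_n}$ and $Z_n=Y_n-G(Y_{n-1-R_{n-1}},\al_{n-1})$ as in \lemref{lem:XZ}, the bound $|Z_0|\le(1+\ka)\|\xi\|$ lets us pick $K>1$, depending only on $\ka$ and $p_{\max}$, with $\E[\lbar p_0|Z_0|^2]<K\E\|\xi\|^2$. The goal is to show that for all sufficiently small $\e$,
$$\E[\lbar p_n|Z_n|^2]<Ke^{-\lambda n\e}\E\|\xi\|^2\qquad\text{for every }n\in\N_0.$$
If this fails, let $k\ge1$ be the first index of failure; then \eqref{4.4} holds with $k$ replaced by $k-1$, so \eqref{4.4a}--\eqref{4.4d} supply, for $n\le k-1$, the a priori bounds $\E|Z_n|^2\le\tfrac1{p_{\min}}Ke^{-\lambda n\e}\E\|\xi\|^2$, $\E[\lbar p_n|Y_n|^2]\le r_p\beta(\e)Ke^{-\lambda n\e}\E\|\xi\|^2$, $\E[\lbar p_n|Y_{n-1-R_{n-1}}|^2]\le r_pe^{\lambda(r+\e)}\beta(\e)Ke^{-\lambda n\e}\E\|\xi\|^2$, and the analogous bound for $\E[\lbar p_n|Y_{n-R_n}|^2]$ (from \eqref{4.4b} and $R_n\e\le r$). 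A contradiction is then obtained by estimating $\E[\lbar p_k|Z_k|^2]$.

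\emph{Discrete It\^o step.} From \eqref{EM} one reads off $Z_{n+1}=Z_n+f_n\e+g_n\Delta w_n$, where $f_n=f(Y_n,Y_{n-R_n},\al_n)$ and $g_n=g(Y_n,Y_{n-R_n},\al_n)$. Conditioning on the $\sigma$-algebra $\mathcal G_n$ carrying $\{w(s):s\le n\e\}$ and $\{\al_0,\dots,\al_n\}$ — using $\E[\Delta w_n|\mathcal G_n]=0$, $\E[\Delta w_n\Delta w_n^\top|\mathcal G_n]=\e\,{\bf I}_m$ and the conditional independence of $\al_{n+1}$ from $(Z_n,f_n,g_n,\Delta w_n)$ given $\mathcal G_n$ — together with the expansion $e^{\e Q}={\bf I}+\e Q+\e^2\sum_{k\ge2}\tfrac{\e^{k-2}Q^k}{k!}$ from \remref{rem:rem}, one gets
$$\E\big[\lbar p_{n+1}|Z_{n+1}|^2\,\big|\,\mathcal G_n\big]=\lbar p_n|Z_n|^2+\e\,\mathcal K_n+\e^2\mathcal R_n,$$
with $\mathcal K_n=2\lbar p_nZ_n^\top f_n+\lbar p_n|g_n|^2+\sum_{j\in\M}q_{\al_nj}p_j|Z_n|^2$, and (by \eqref{e.x}, the contraction bound $|G(x,i)|\le\ka|x|$, and \eqref{n0}) $|\mathcal R_n|\le C(|Z_n|^2+|Y_n|^2+|Y_{n-R_n}|^2)$ for a constant $C$ independent of $\e\in(0,1)$. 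Summing over $n=0,\dots,k-1$ with the integrating factor $e^{cn\e}$ for a constant $c>c_0$ fixed as in the proof of \thmref{thm:1}, one expresses $e^{ck\e}\E[\lbar p_k|Z_k|^2]$ through $\E[\lbar p_0|Z_0|^2]$ and $\sum_{n<k}e^{cn\e}\{(e^{c\e}-1)\E[\lbar p_n|Z_n|^2]+e^{c\e}(\e\,\E[\mathcal K_n]+\e^2\E[\mathcal R_n])\}$.

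\emph{Applying {\rm (H)}: the crux.} With $x=Y_n$, $y=Y_{n-R_n}$, $i=\al_n$, assumption {\rm (H)} controls $2\widehat Z_n^\top f_n+|g_n|^2+\sum_jq_{\al_nj}\tfrac{p_j}{p_{\al_n}}|\widehat Z_n|^2$, where $\widehat Z_n:=Y_n-G(Y_{n-R_n},\al_n)$. The obstruction, with no continuous-time analogue, is that $\widehat Z_n\ne Z_n$: writing $\Delta_n:=Z_n-\widehat Z_n=G(Y_{n-R_n},\al_n)-G(Y_{n-1-R_{n-1}},\al_{n-1})$, substituting $Z_n=\widehat Z_n+\Delta_n$ in $\mathcal K_n$ and invoking {\rm (H)} on the $\widehat Z_n$-part leaves $\mathcal K_n\le\lbar p_n(-c_0|\widehat Z_n|^2+c_1|Y_n|^2+c_2|Y_{n-R_n}|^2)+\mathcal D_n$, with $\mathcal D_n$ collecting the cross terms produced by the substitution (products of $\Delta_n$ with $f_n$, with $\widehat Z_n$, and with the generator entries $q_{\al_nj}$). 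These are to be controlled via $|\Delta_n|\le\ka(|Y_{n-R_n}|+|Y_{n-1-R_{n-1}}|)$, the growth bound $|f_n|^2\le C_0(|Y_n|^2+|Y_{n-R_n}|^2)$ of \eqref{e.x} — which is exactly why \eqref{e.x} is imposed — Young's inequality (every product containing $f_n$ carries the spare factor $\e$, so one may split it with an $\e$-dependent weight and make its contribution $o(1)$), and the a priori bounds above; after also replacing $-c_0|\widehat Z_n|^2$ by $-c_0|Z_n|^2$ up to another such error and taking expectations, one should reach
$$\E[\mathcal K_n]\le-c_0\,\E[\lbar p_n|Z_n|^2]+r_p\beta(\e)\big(c_1+c_2e^{\lambda r}\big)Ke^{-\lambda n\e}\E\|\xi\|^2+\mathcal E_n,$$
where the error $\mathcal E_n$ is to be bounded by $\mu_0Ke^{-\lambda n\e}\E\|\xi\|^2$ for $\e\le\e_0$ with a suitable $\e_0\in(0,\e_1]$. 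The coefficient of $\E[\lbar p_n|Z_n|^2]$ in $(e^{c\e}-1)\E[\lbar p_n|Z_n|^2]+e^{c\e}\e\,\E[\mathcal K_n]$ equals $(e^{c\e}-1)-e^{c\e}\e c_0+O(\e^2)$, which is positive for small $\e$ because $c>c_0$, so one may insert the strict bound $\E[\lbar p_n|Z_n|^2]<Ke^{-\lambda n\e}\E\|\xi\|^2$ valid for $n\le k-1$. Using $-c_0+r_p\beta(\e)(c_1+c_2e^{\lambda r})+\lambda\le-\mu_0$, the elementary estimate $\e\sum_{n<k}e^{(c-\lambda)n\e}=\tfrac{e^{(c-\lambda)k\e}-1}{e^{(c-\lambda)\e}-1}\le\tfrac{e^{(c-\lambda)k\e}-1}{c-\lambda}$, and shrinking $\e_0$ so that all remaining $O(\e)$ and $O(\e^2)$ terms are absorbed, the telescoped sum is at most $K\E\|\xi\|^2(e^{(c-\lambda)k\e}-1)$; combined with $\E[\lbar p_0|Z_0|^2]<K\E\|\xi\|^2$ this forces $\E[\lbar p_k|Z_k|^2]<Ke^{-\lambda k\e}\E\|\xi\|^2$, contradicting the choice of $k$. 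Hence the claimed bound holds.

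\emph{Conclusion and main obstacle.} Granting the claim for all $n$, part (b) of \lemref{lem:XZ} gives $\E|Y^{\xi,i_0}_n|^2\le\tfrac1{p_{\min}}\beta(\e)Ke^{-\lambda n\e}\E\|\xi\|^2$ for all $n\ge-N$, i.e.\ mean-square exponential stability of $\{Y_n\}$ with the stated constant; dividing by $n\e$, taking logarithms and sending $n\to\infty$ yields \eqref{d.2}. The main obstacle is the third step: the discrete neutral term $Z_n$ carries the delayed state $Y_{n-1-R_{n-1}}$ under the \emph{previous} regime $\al_{n-1}$, whereas {\rm (H)} is tied to $Y_{n-R_n}$ under $\al_n$; reconciling them produces the correction terms $\mathcal D_n$ — involving $\Delta_n$ and the generator entries — which have no counterpart in the continuous setting, and one must show, using \eqref{e.x}, the contraction constant $\ka$, Young's inequality with $\e$-scaled weights, and \lemref{lem:XZ}, that they (and the discrepancies $\beta(\e)-\beta$, $e^{\lambda(r+\e)}-e^{\lambda r}$) all vanish as $\e\downarrow0$ and hence fit inside the fixed room $\mu_0=\tfrac12|H(\lambda)|$ that the strict inequality $\lambda<\lambda_0$ provides.
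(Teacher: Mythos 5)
Your overall architecture is the same as the paper's: a contradiction at the first failing index, the a priori bounds of Lemma \ref{lem:XZ}, a discrete one-step expansion of $|Z_{k+1}|^2$, expansion of $e^{\e Q}$ via Remark \ref{rem:rem}, and a choice of $\e_0$ that fits the $O(\e^2)$ remainders inside the margin created by $\lambda<\lambda_0$. (Your integrating factor $e^{cn\e}$ and telescoping over $n<k$ is a cosmetic variant of the paper's single-step estimate $\E[\lbar p_{k+1}|Z_{k+1}|^2]\le K_1e^{-\lambda k\e}(1-\e\lambda)<K_1e^{-\lambda(k+1)\e}$; either bookkeeping is fine.) The problem is the step you yourself flag as the crux. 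You replace $Z_n$ by $\widehat Z_n=Y_n-G(Y_{n-R_n},\al_n)$ so as to apply (H), and claim the correction terms $\mathcal D_n$ built from $\Delta_n=G(Y_{n-R_n},\al_n)-G(Y_{n-1-R_{n-1}},\al_{n-1})$ can be driven below the fixed room $\mu_0=\tfrac12|H(\lambda)|$ by shrinking $\e$. That does not work as described. The assertion that ``every product containing $f_n$ carries the spare factor $\e$'' is false: in the expansion the drift cross term $2\lbar p_n\Delta_n^\top f_n$ sits inside $\e\,\mathcal K_n$ with exactly the same single factor $\e$ as the main terms $-c_0|Z_n|^2$, $c_1|Y_n|^2$, $c_2|Y_{n-1-R_{n-1}}|^2$; only $|f_n\e|^2$ and the generator remainders $\e^2\tau_{ij}(\e)$ are genuinely $O(\e^2)$. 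Moreover $\Delta_n$ is not small: the delay is merely Borel measurable, so the indices $n-R_n$ and $n-1-R_{n-1}$ can be far apart (and the regime can differ), and the only available bound is $|\Delta_n|\le\ka\,(|Y_{n-R_n}|+|Y_{n-1-R_{n-1}}|)$ plus a Lipschitz-in-$i$ discrepancy you never control. Consequently, after using \eqref{e.x}, Cauchy--Schwarz and the a priori bounds of Lemma \ref{lem:XZ}, the best you get is $\mathcal E_n\le C''\,K e^{-\lambda n\e}\E\|\xi\|^2$ with a constant $C''$ of order $\ka\sqrt{C_0}$, $c_0\ka$, $\|Q\|_\infty\ka$ (times $r_p\beta e^{\lambda r}/p_{\min}$, etc.) that does not tend to $0$ as $\e\downarrow0$; it therefore cannot be absorbed into $\mu_0$, which is a fixed number independent of these constants. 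In addition, the correction terms coming from $\sum_j q_{\al_nj}p_j\,(2\widehat Z_n^\top\Delta_n+|\Delta_n|^2)$ and from converting $-c_0|\widehat Z_n|^2$ back into $-c_0|Z_n|^2$ contain no $f_n$ at all, so the Young-with-$\e$-weights device you invoke does not even apply to them.

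For comparison, the paper does not attempt this reconciliation: in Step 4 it applies \eqref{e.cond} directly to the discrete quantity $Z_k$ paired with $f(Y_k,Y_{k-R_k},\al_k)$, $g(Y_k,Y_{k-R_k},\al_k)$ and $|Y_{k-1-R_{k-1}}|^2$ on the right-hand side, so no terms of the type $\mathcal D_n$ ever appear in its estimate; your decomposition $Z_n=\widehat Z_n+\Delta_n$ is precisely where you part ways with the paper, and it is exactly where your argument, as written, does not close. To repair your route you would need either a quantitative smallness statement for $\Delta_n$ (which fails for general measurable $\delta$) or a version of hypothesis (H) formulated so that it applies with the neutral term evaluated at the shifted index and previous regime; without one of these, the claimed contradiction at step $k$ is not established.
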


\begin{proof} 

The proof is divided into four steps. 

\noindent {\bf Step 1.}
 By virtue of \eqref{ee.3}, we can select a positive constant
 $ \e_0 < ({1}/{\lambda_0}) \vee  ( -\frac{2}{\lambda} \ln \ka -r)$ such that 
 \beq{eps}- c_0    
+ c_1 r_p\beta (\e_0)+ c_2  r_p e^{\lambda (r+\e_0)} +\e_0 C_1(\e_0)<-\lambda,
\eeq
 where
 $$\beta(\e) = \dfrac{1}{(1-\ka e^{\al (r+\e)/2})^2}$$
 and
$$C_1(\e) = \e r_0 \beta(\e)p_{\min}^{-1} \Big((2\|Q\|_\infty +c_1+C_0)  +  C_0e^{\lambda r} +  (2\|Q\|_\infty +c_2)e^{\lambda (1+r)\e}\Big), \ \ \e>0.$$
Consequently, it follows that
 \beq{d.4.2}
- c_0    
+ c_1 r_p\beta (\e)+ c_2  r_p e^{\lambda (r+\e)} +\e C_1(\e)<-\lambda \ \ \text{for} \ \ \e\in (0, \e_0].
\eeq
For the remainder of the proof, we fix $\e\in (0, \e_0]$. 
  Let $(\xi, i_0) \in C^b_{\mathcal{F}_0}([-r,0], \mathbb{R}^d) \times \mathcal{M}$.
For the sake of notational simplicity,  let $\{Y_n\} = \{Y^{\xi,i_0}_n\}$ be the approximation sequence given by \eqref{EM}, and $Z_n =Y_n - G(Y_{n-1-R_{n-1}}, \al_{n-1})$ for $n\in \mathbb{N}_0$. Without loss of generality, assume $\mathbb{E}\|\xi\|^2 > 0$.

\medskip

\noindent {\bf  Step 2:} 
Let us consider the real-valued sequences $\{\Phi_n\}_{n\ge0}$ and $\{\Psi_n\}_{n\ge0}$ defined by
\begin{equation*}
\Phi_n := \mathbb{E}\Big[\lbar{p}_n|Z_n|^2\Big], \quad \Psi_n := Ke^{-\lambda n\varepsilon}\mathbb{E}\|\xi\|^2 \quad \text{for} \quad n \in \mathbb{N}_0,
\end{equation*}
where $K_1 = K\mathbb{E}\|\xi\|^2$ and $K = 2(1+\kappa^2)\sum_{i\in \M}p_i$. Then $\Phi_0 \le \Psi_0$.
We aim to show that
\beq{4.16}
\Phi_n = \mathbb{E}\Big[\lbar{p}_n|Z_n|^2\Big] \le Ke^{-\lambda n\varepsilon}\mathbb{E}\|\xi\|^2 = \Psi_n \ \  \text{for} \ \  n \in \mathbb{N}_0.
\eeq
Were this statement false, there would exist $k \in \mathbb{N}_0$ such that
\beq{4.17}
\Phi_n \le \Psi_n \text{ for } 0 \le n \le k, \quad \Phi_{k+1} > \Psi_{k+1}.
\eeq
Recalling \eqref{EM}, we have
$$
Z_{k+1} = Z_k + f(Y_k, Y_{k-R_k}, \alpha_k)\varepsilon + g(Y_k, Y_{k-R_k}, \alpha_k)\Delta w_k. $$
Taking the squared norm on both sides and multiplying by $\lbar{p}_{k+1}$, we obtain
\beq{4.20}
\barray
 \lbar{p}_{k+1}|Z_{k+1}|^2 \ad = \lbar{p}_{k+1}|Z_k|^2 + \lbar{p}_{k+1}|f(Y_k, Y_{k-R_k}, \alpha_k)\varepsilon|^2 + \lbar{p}_{k+1}|g(Y_k, Y_{k-R_k}, \alpha_k)\Delta w_k|^2 \\
\ad \quad + 2\lbar{p}_{k+1}Z_k^\top f(Y_k, Y_{k-R_k}, \alpha_k)\varepsilon + 2\lbar{p}_{k+1}Z_k^\top g(Y_k, Y_{k-R_k}, \alpha_k)\Delta w_k \\
\ad \quad + 2\lbar{p}_{k+1}\big(f(Y_k, Y_{k-R_k}, \alpha_k)\varepsilon\big)^\top g(Y_k, Y_{k-R_k}, \alpha_k)\Delta w_k.
\earray
\eeq
It is readily verified that
\beq{4.21}
\barray
\ad \mathbb{E}\Big[\lbar p_{k+1} |g(Y_k, Y_{k-R_k}, \alpha_k)\Delta w_k|^2 \Big]= \varepsilon\mathbb{E}\Big[ p_{k+1}|g(Y_k, Y_{k-R_k}, \alpha_k)|^2\Big],\\
\ad \mathbb{E}\Big[ 2 \lbar p_{k+1} Z_k^\top g(Y_k, Y_{k-R_k}, \alpha_k)\Delta w_k\Big]= 0,\\
\ad \mathbb{E}\Big[ 2 f(Y_k, Y_{k-R_k}, \alpha_k)^T g(Y_k, Y_{k-R_k}, \alpha_k)\Delta w_k\Big] = 0. 
\earray
\eeq
Taking expectations on both sides of \eqref{4.20} and substituting \eqref{4.21} lead to 
\beq{4.22}
\barray
\E\Big[\lbar{p}_{k+1}|Z_{k+1}|^2\Big] \ad = \E\Big[\lbar{p}_{k+1}|Z_k|^2\Big] + \varepsilon^2\E\Big[\lbar{p}_{k+1}|f(Y_k, Y_{k-R_k}, \alpha_k)|^2\Big] \\
&\quad + \e\E\Big[\lbar{p}_{k+1}\Big(2 Z_k^\top f(Y_k, Y_{k-R_k}, \alpha_k) + |g(Y_k, Y_{k-R_k}, \alpha_k)|^2\Big) \Big].
\earray
\eeq

\medskip

\noindent {\bf Step 3:}
We proceed to estimate the terms on the right hand side of \eqref{4.22}.
By the property of conditional expectation and \eqref{e.main3}, we have
\beq{ee.12}
\barray
\E \Big[\lbar p_{k+1} | Z_k|^2  {\bf1}_{\{\al_{k+1}=\al_k\}} \Big]\ad = \E\bigg[\E \Big[\lbar p_{k} | Z_k|^2  {\bf1}_{\{\al_{k+1}=\al_k\}}  \Big| \al_k\Big] \bigg]  \\
\ad = \E\bigg[\E \Big[\lbar p_{k} | Z_k|^2 \Big| \al_k \Big]  \E \Big[ {\bf1}_{\{\al_{k+1}=\al_k\}} | \al_k \Big]  \bigg]\\
\ad = \E\bigg[\E \Big[\lbar p_{k} | Z_k|^2 \Big| \al_k \Big]
\sum\limits_{i\in \M}  {\bf1}_{\{\al_{k}=i\}} \big( 1 + q_{ii}\e + \e^2\tau_{ii}(\e)
\big)
  \bigg].
\earray
\eeq
In the derivation above, we have utilized the fact that
${\bf1}_{\{\al_{k+1}=\al_k\}}$ and $\mathcal{F}_{{k}\e}$ are independent given $\al_k$ and the fact that $\P (\al_{k+1}=i|\al_k=i)= 1 +q_{ii}\e + \e^2\tau_{ii}(\e)$ for any $i\in \M$. Consequently, \eqref{ee.12} impliest
\beq{ee.13}
\barray
\E \Big[\lbar p_{k+1} | Z_k|^2  {\bf1}_{\{\al_{k+1}=\al_k\}} \Big] \ad =\sum\limits_{i\in \M} \E\Big[ \lbar p_{k} | Z_k |^2
  {\bf1}_{\{\al_{k}=i\}}  \Big]+\sum\limits_{i\in \M} \E\Big[ p_i |  Z_k |^2
 {\bf1}_{\{\al_{n}=i\}} \big( q_{ii}\e + \e^2\tau_{ii}(\e)
\big)
  \Big]\\
  \ad  \qquad =\E\Big[ \lbar p_{k} | Z_k |^2    \Big] +\sum\limits_{i\in \M} \E\Big[p_i | Z_k |^2
 {\bf1}_{\{\al_{k}=i\}} \big( q_{ii}\e + \e^2\tau_{ii}(\e)
\big)
  \Big].
 \earray
\eeq
Similarly,
\beq{ee.14}
\barray
\E \Big[\lbar p_{k+1} | Z_k|^2  {\bf1}_{\{\al_{k+1}\ne\al_k\}} \Big]=\sum\limits_{i\in \M}\sum\limits_{j\ne i} \E\Big[p_j | Z_k |^2
 {\bf1}_{\{\al_{k}=i\}} \big( q_{ij}\e + \e^2\tau_{ij}(\e)
\big)
  \Big].
\earray
\eeq
In view of \eqref{ee.13} and \eqref{ee.14},
\beq{}\notag
\barray
 \E \Big[\lbar p_{k+1} | Z_k |^2 \Big]   \ad =\E\Big[ \lbar p_{k} | Z_k |^2    \Big] +\sum\limits_{i\in \M}\sum\limits_{j\in \M} \E\Big[r_j | Z_k |^2
 {\bf1}_{\{\al_k=i\}} \big( q_{ij}\e + \e^2\tau_{ij}(\e)
\big)
  \Big]\\
  \ad   =\E\Big[ \lbar p_{k} | Z_k |^2    \Big] + \sum\limits_{j\in \M} \E\Big[p_j | Z_k |^2
 \big( q_{\al_k j}\e + \e^2\tau_{\al_k j}(\e)
\big)
  \Big];
 \earray
\eeq
that is,
\beq{hh.1}
\E\Big[ \lbar p_{k+1} |Z_k|^2\Big]=\E\Big[ \lbar p_{k} |Z_k|^2\Big] + 
\sum\limits_{j\in \M} \E\Big[p_j | Z_k |^2
 \big( q_{\al_k j}\e + \e^2\tau_{\al_k j}(\e)
\big)
  \Big].
\eeq
Similarly,
\beq{hh.2}
\barray
\ad \E\Big[\lbar{p}_{k+1}\Big( Z_k^\top f(Y_k, Y_{k-R_k}, \alpha_k) + |g(Y_k, Y_{k-R_k}, \alpha_k)|^2\Big) \Big]\\ \ad \quad =\E\Big[\lbar{p}_{k}\Big(2 Z_k^\top f(Y_k, Y_{k-R_k}, \alpha_k) + |g(Y_k, Y_{k-R_k}, \alpha_k)|^2\Big) \Big]\\
\ad \qquad + \sum\limits_{j\in \M} \E\Big[\lbar p_{j}
\Big( 2Z_k^\top f(Y_k, Y_{k-R_k}, \alpha_k) + |g(Y_k, Y_{k-R_k}, \alpha_k)|^2\Big) 
    \big( q_{\al_k j}\e + \e^2\tau_{\al_k j}(\e)
\big) \Big].
\earray
\eeq
Substituting \eqref{hh.1} and \eqref{hh.2} into \eqref{4.22}, we arrive at
 \beq{hh.3}
\barray
\ad \E\Big[\lbar{p}_{k+1}|Z_{k+1}|^2\Big]  = \E\Big[\lbar{p}_{k}|Z_k|^2\Big]   
\\
\ad \quad + \e\E\Big[\lbar{p}_{k}\Big(2 Z_k^\top f(Y_k, Y_{k-R_k}, \alpha_k) + |g(Y_k, Y_{k-R_k}, \alpha_k)|^2\Big)+ \sum\limits_{j\in \M} \big( p_j  |Z_k|^2 
  q_{\al_k j}\big)\Big]\\
  \ad \quad + \varepsilon^2\E\Big[\lbar{p}_{k+1}|f(Y_k, Y_{k-R_k}, \alpha_k)|^2\Big]
  \\
\ad \quad + \e\sum\limits_{j\in \M} \E\Big[p_j \Big(  2Z_k^\top f(Y_k, Y_{k-R_k}, \alpha_k) + |g(Y_k, Y_{k-R_k}, \alpha_k)|^2\Big) 
 \big( q_{\al_k j}\e + \e^2\tau_{\al_k j}(\e)
\big)
  \Big].
\earray
\eeq

\bigskip

\noindent {\bf  Step 4:}
We proceed to estimate the terms on the right hand side of \eqref{hh.3}.
Let 
$$r_0 = \big(\sum_{j\in \M} p_j \big)\Big(\exp({m_0 \|Q\|_\infty}) - (m_0-1) \|Q\|_\infty +1 \Big);$$
see Remark \ref{rem:rem}.
Noting that $\e\in (0,1)$, we have $\e^2<\e$.
Furthermore, in view of \eqref{n0}, we have \beq{n1}\notag
\barray
|q_{\al_{k}j}\e  + \e^2\tau_{\al_{k}j}(\e)| \ad \le (\|Q\|_\infty + |\tau_{\al_{k}j}(\e)|)\e\\
\ad \le \Big(\exp({m_0 \|Q\|_\infty}) - 1- (m_0-1) \|Q\|_\infty\Big)\e.
\earray
\eeq
Hence,
$$\sum_{j\in \M} p_j  |q_{\al_{k}j}\e  + \e^2\tau_{\al_{k}j}(\e)| \le r_0 \e.$$
Using \eqref{e.x} together with the definition of $r_0$, we obtain
\beq{hh.4}
\e^2\E\Big[\lbar{p}_{k+1}|f(Y_k, Y_{k-R_k}, \alpha_k)|^2\Big]\le \e^2 r_0 C_0 \E \Big[ |Y_k|^2 + |Y_{k-R_k}|^2 \Big]. 
\eeq
We observe that
 $$|Z_k|^2\le (1+\ka^2)(|Y_k|^2 + |Y_{k-1-R_{k-1}}|^2)\le 2|Y_k|^2 + 2|Y_{k-1-R_{k-1}}|^2.$$
 It follows from \eqref{e.cond} that
 \beq{}\notag
 \barray
 \ad 2Z_k^\top f(Y_k, Y_{k-R_k}, \alpha_k)+ |g(Y_k, Y_{k-R_k}, \alpha_k)|^2 \\
 \ad \quad \le \|Q\|_\infty |Z_k|^2 + c_1 |Y_k|^2 + c_2|Y_{k-1-R_{k-1}}|^2\\
 \ad \quad \le (2\|Q\|_\infty +c_1)|Y_k|^2 + (2\|Q\|_\infty +c_2)|Y_{k-1-R_{k-1}}|^2.
 \earray
 \eeq
Hence
\beq{hh.5}
\barray
 \ad\e\sum\limits_{j\in \M} \E\Big[p_j \Big( 2Z_k^\top f(Y_k, Y_{k-R_k}, \alpha_k) + |g(Y_k, Y_{k-R_k}, \alpha_k)|^2\Big) 
 \big( q_{\al_k j}\e + \e^2\tau_{\al_k j}(\e)
\big)
  \Big]\\
  \ad \ \ \le \e^2  r_0 \Big( (2\|Q\|_\infty +c_1)\E|Y_k|^2 + (2\|Q\|_\infty +c_2)\E|Y_{k-1-R_{k-1}}|^2\Big).  
\earray
\eeq
Utilizing \eqref{hh.4} and \eqref{hh.5} in \eqref{hh.3}, we deduce
\beq{jk-2}
\barray
\ad \E\Big[\lbar{p}_{k+1}|Z_{k+1}|^2\Big]  \le \E\Big[\lbar{p}_{k}|Z_k|^2\Big]   
\\
\ad \quad +\e\E\Big[\lbar{p}_{k}\Big(2 Z_k^\top f(Y_k, Y_{k-R_k}, \alpha_k) + |g(Y_k, Y_{k-R_k}, \alpha_k)|^2\Big)+ \sum\limits_{j\in \M} \big( p_j  |Z_k|^2 
  q_{\al_k j}\big)\Big]\\
  \ad \quad +\e^2  r_0 \Big( (2\|Q\|_\infty +c_1+C_0)\E|Y_k|^2  + C_0\E|Y_{k-R_k}|^2+ (2\|Q\|_\infty +c_2)\E|Y_{k-1-R_{k-1}}|^2\Big). \earray
\eeq
By \eqref{e.cond}, 
\beq{}\notag
\barray
\ad \lbar{p}_{k}\Big(2 Z_k^\top f(Y_k, Y_{k-R_k}, \alpha_k) + |g(Y_k, Y_{k-R_k}, \alpha_k)|^2\Big)  + \sum\limits_{j\in \M} \big( p_j | Z_k |^2
  q_{\al_k j}\big)\\
  \ad \quad \le -c_0 \lbar p_k |Z_k|^2 + c_1 \lbar p_k|Y_k|^2 + c_2  \lbar p_k|Y_{k-1-R_{k-1}}|^2.
\earray
\eeq
Hence, 
\beq{jk-3}
\barray
\ad \e\E\Big[\lbar{p}_{k}\Big(2 Z_k^\top f(Y_k, Y_{k-R_k}, \alpha_k) + |g(Y_k, Y_{k-R_k}, \alpha_k)|^2\Big)+ \sum\limits_{j\in \M} \big( p_j  |Z_k|^2 
  q_{\al_k j}\big)\Big]\\
  \ad \qquad \le \e\bigg(-c_0 \E\Big[ \lbar p_k |Z_k|^2\Big]  + c_1 \E\Big[ \lbar p_k |Y_k|^2\Big] + c_2 \E \Big[ \lbar p_k |Y_{k-1-R_{k-1}}|^2\Big]\bigg).
   \earray
\eeq
Substituting \eqref{jk-3} into \eqref{jk-2} yields
\beq{}\notag
\barray
 \ad\E\Big[\lbar{p}_{k+1}|Z_{k+1}|^2\Big]   \le  (1-\e c_0)\E\Big[\lbar{p}_{k}|Z_k|^2\Big]   
+ \e c_1 \E\Big[ \lbar p_k |Y_k|^2\Big] + \e c_2 \E \Big[ \lbar p_k |Y_{k-1-R_{k-1}}|^2\Big]\\
  \ad \quad +\e^2  r_0 \Big( (2\|Q\|_\infty +c_1+C_0)\E|Y_k|^2  + C_0\E|Y_{k-R_k}|^2+ (2\|Q\|_\infty +c_2)\E|Y_{k-1-R_{k-1}}|^2\Big). \earray
\eeq
Applying Lemma \ref{lem:XZ}, we find
\beq{} \notag
\barray
 \ad \E\Big[\lbar{p}_{k+1}|Z_{k+1}|^2\Big]   \le  (1-\e c_0) K_1 e^{-\lambda k\e}    
+ \e c_1 K_1 r_p\beta (\e) e^{-\lambda k\e}  + \e c_2 K_1 r_p \beta(\e) e^{\lambda (r+\e)}e^{-\lambda k\e}\\
  \ad \quad + \e^2 r_0 \beta(\e)K_1 p_{\min}^{-1} e^{-\lambda k\e}\Big((2\|Q\|_\infty +c_1+C_0)  +  C_0e^{\lambda R_{k}\e} +  (2\|Q\|_\infty +c_2)e^{\lambda(1+R_{k-1})\e}\Big)\\
  \ad\le  K_1 e^{-\lambda k\e} \bigg( 1+ \e \Big[ - c_0    
+  c_1 r_p\beta (\e)+ c_2  r_p e^{\lambda (r+\e)} +\e C_1(\e)\Big]\bigg),
  \earray
\eeq
where $$C_1(\e) = \e r_0 \beta(\e)p_{\min}^{-1} \Big((2\|Q\|_\infty +c_1+C_0)  +  C_0e^{\lambda r} +  (2\|Q\|_\infty +c_2)e^{\lambda (1+r)\e}\Big).$$
By \eqref{d.4.2}, we have
\beq{jk-20}
\E\Big[\lbar{p}_{k+1}|Z_{k+1}|^2\Big] \le K_1 e^{-\lambda k\e}(1-\e\lambda ).
\eeq
Since $\e<\e_0<1/\lambda_0$ and $\lambda <\lambda_0$, then $0<1-\e\lambda <e^{-\lambda \e}$. It follows from \eqref{jk-20} that 
$$\E\Big[\lbar{p}_{k+1}|Z_{k+1}|^2\Big] < K_1 e^{-\lambda (k+1)\e};$$
that is, $\Phi_{k+1}<\Psi_{k+1}$. 
This stands in contradiction to the second inequality in \eqref{4.17}. 
As a result, \eqref{4.16} holds; that is,
$$
\mathbb{E}\Big[\lbar{p}_n|Z_n|^2\Big] \le Ke^{-\lambda n\varepsilon}\mathbb{E}\|\xi\|^2 \ \  \text{for} \ \  n \in \mathbb{N}_0.$$
By Lemma \ref{lem:XZ}, 
$$
\mathbb{E}|Y_n|^2 \le p^{-1}_{\min} \beta(\e) Ke^{-\lambda n\varepsilon}\mathbb{E}\|\xi\|^2 \ \  \text{for} \ \  n \in \mathbb{N}_0.$$
Thus, we conclude that the EM approximation \eqref{EM} is mean-square exponentially stable, and the inequality \eqref{d.2} is satisfied.
\end{proof}

\begin{rem}\label{rem:eps}
{\rm 
Let the assumptions of Theorem \ref{thm:2} hold. Then
the proof of Theorem \ref{thm:2}  provides guidance for choosing the step size $\e$ in the numerical experiments.
Indeed, let a positive constant
$ \e_0 < ({1}/{\lambda_0}) \vee  ( -\frac{2}{\lambda} \ln \ka -r)$ that satisfies condition \eqref{eps}. Then for any $\e\in (0, \e_0]$, the EM approximation $\{Y_n\}$ given by \eqref{EM} is mean-square exponentially stable.
}
\end{rem}

To illustrate the theoretical results derived in the previous sections, we present the following numerical example.

\begin{exm}\label{exam:1}
{\rm Consider the scalar NSDDE given by
\beq{yy.1}
\barray
d \big( X(t) - G(X(t-\delta(t), \al(t)) \big)\ad  = f \big( X(t), X(t-\delta(t)),  \al(t)\big) dt \\
\ad \ \  + g\big( X(t), X(t-\delta(t)),  \al(t) \big)dw(t)  \ \ \text{for} \ \ t\ge 0,
\earray
\eeq
where the state space of the Markov chain $\al(t)$ is $\M=\{1, 2\}$ and the time-dependent delay is $\delta(t)=2-\cos t$ for $t\ge 0$. The coefficients of Eq. \eqref{yy.1} are specified as
\beq{}\notag
\barray
\aad G(y, 1) = 0.05y, \ \ f(x, y, 1) = -1.1x + 0.2y, \ \ g(x, y, 1) = 0.3x\cos y,\\
\aad G(y,2) = 0.1 y, \ \ f(x, y, 2)= 0.2 x +0.1y, \ \ g(x, y, 2) = 0.2\sin y 
\earray
\eeq
for $(x, y)\in \mathbb{R}^2$. Note that here 
the delay bound is
$r=3$ and the contraction constant is $\ka=0.1$. 

It is instructive to examine the subsystems associated with Eq. \eqref{yy.1}. They correspond to the two states of the Markov chain:
\beq{yy.2}
\barray
 \ad d \big( X^{(1)}(t) - 0.05 X^{(1)}(t-\delta(t)) \big)   =  \big( -1.1X^{(1)}(t) + 0.2X^{(1)}(t-\delta(t))\big) dt \\
\ad \qquad\qquad  + 0.3X^{(1)}(t) \cos ( X^{(1)}(t-\delta(t)) ) dw(t),
\earray
\eeq
and
\beq{yy.3}
\barray
 \ad d \big( X^{(2)}(t) - 0.1 X^{(2)}(t-\delta(t)) \big)  =  \big( 0.2X^{(2)}(t) + 0.1X^{(2)}(t-\delta(t))\big) dt \\
 \ad\qquad \qquad+ 0.2 \sin ( X^{(2)}(t-\delta(t)) ) dw(t).
\earray
\eeq
The system switches between these two modes according to the evolution of $\al(t)$.

\begin{figure}[htbp!]
     \centering
  \includegraphics[scale=0.6]{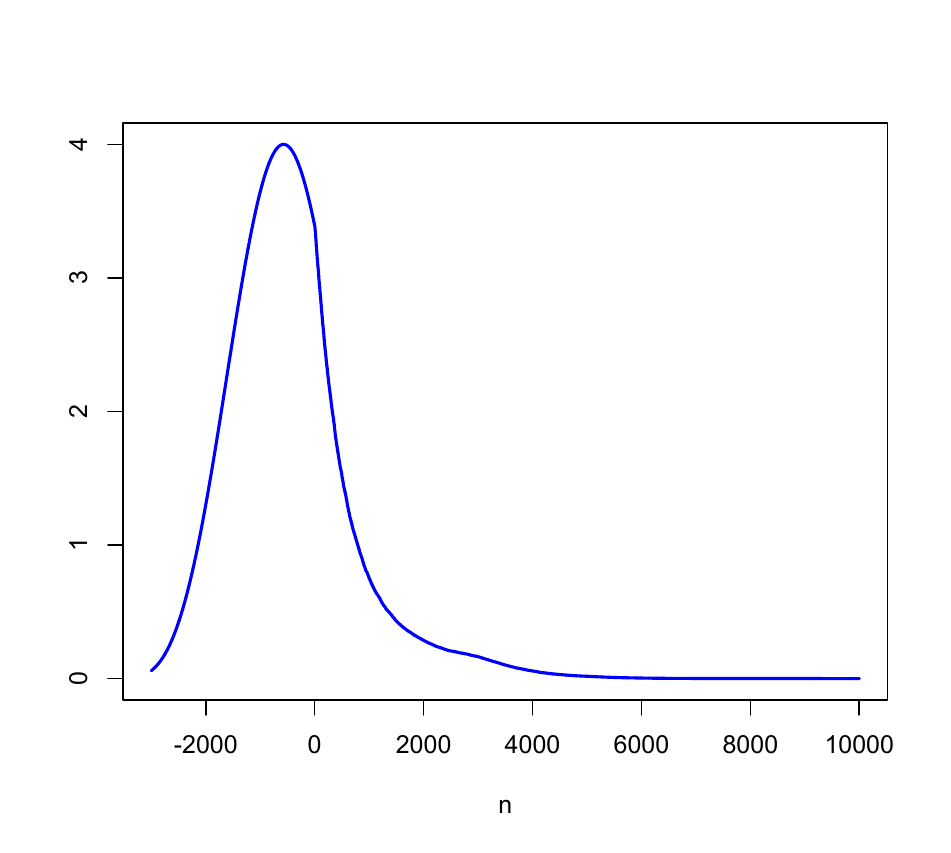}
    \caption{ Graph of $\{\E|Y^{(1)}_n|^2\}$ for the EM approximation $\{Y^{(1)}_n\}$ of Eq. \eqref{yy.2}. The expectation is estimated by averaging over 100 simulations. }
          \label{Fig1}
          \end{figure}

          \begin{figure}[htbp!]
     \centering
  \includegraphics[scale=0.6]{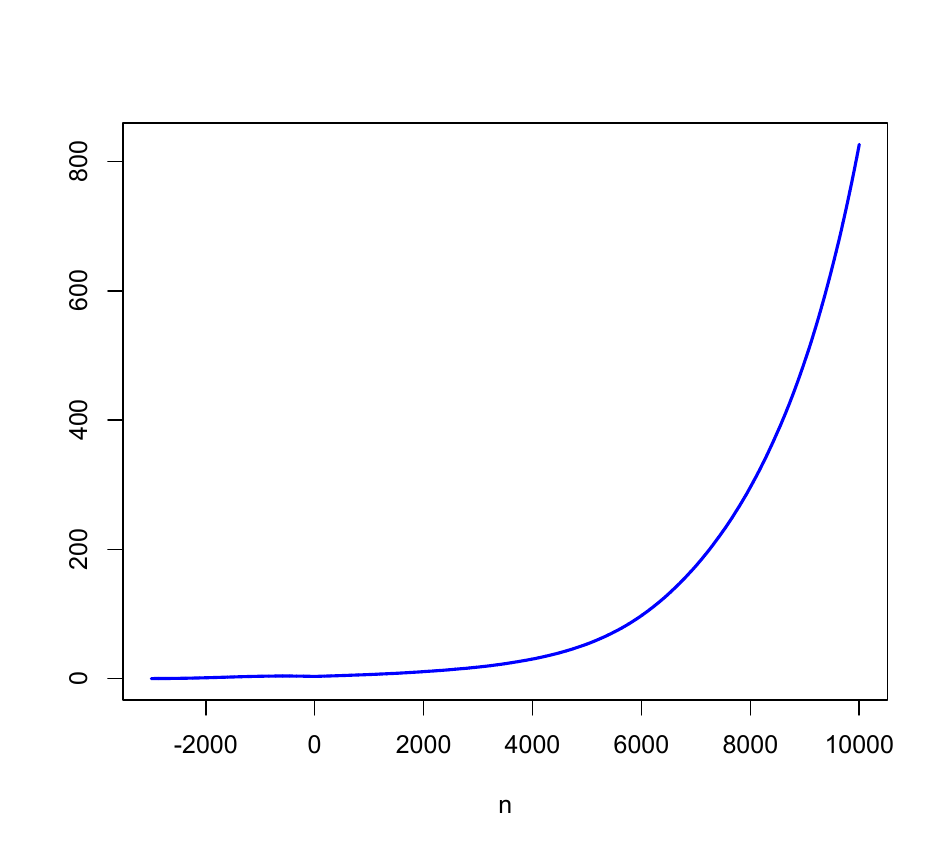}
    \caption{ Graph of $\{\E|Y^{(2)}_n|^2\}$ for the EM approximation $\{Y^{(2)}_n\}$ of Eq. \eqref{yy.3}. The expectation is estimated by averaging over 100 simulations. }
          \label{Fig2}
          \end{figure}

Note that Eq. \eqref{yy.2} and Eq. \eqref{yy.3} can be viewed as special cases of Eq. \eqref{e.main}. Let $\{Y^{(1)}_n\}$ and $\{Y^{(2)}_n\}$ denote the EM approximation sequences for $X^{(1)}(t)$ and $X^{(2)}(t)$, respectively. 
For illustration purposes, we display the graphs of the sequences $\{\E|Y^{(1)}_n|^2\}$ and $\{\E|Y^{(2)}_n|^2\}$ in Figure \ref{Fig1} and Figure \ref{Fig2}, using the initial condition $\xi(t)=1+\sin (1-t)$. The expectation $\E|Y^{(i)}_n|^2$ is approximated by averaging over 100 simulation paths. What do these numerical results suggest? They indicate that Eq. \eqref{yy.2} is likely mean-square exponentially stable, whereas Eq. \eqref{yy.3} appears to be unstable.

Suppose the generator of $\al(t)$ is given by {\footnotesize $Q = \begin{bmatrix} -1 & 1 \\ 3 & -3  \end{bmatrix}$}. 
Our task now is to examine the mean-square exponential stability of the exact solutions and numerical approximations of the coupled system \eqref{e.main}. It is worth noting that in this case, $\sup_{t\ge 0} \delta'(t)=1$. Consequently, the existing stability criteria in the literature—particularly those in \cite{Yuan2015,M2017S,Zhang2019}—are not applicable.

We proceed by verifying assumption (H). Let us choose $p_1=0.5$ and $p_2=1$. We compute:
\beq{yy.4}
\barray
\ad 2 \big(x-G(y, 1)\big) f(x, y, 1)  + |g(x, y, 1)|^2 + \sum\limits_{j\in \M} q_{1j} \dfrac{p_j}{p_1} |x-G(y, 1)|^2\\
\ad \ \ = 2(x-0.05y)(-1.1x+0.2y) + |0.3x\cos y|^2 -(x-0.05y)^2 +2(x-0.05y)^2 \\
\ad \ \ \le 2(x-0.05y)(-1.1x + 0.055y +0.145y) + 0.09x^2 + (x-0.05y)^2\\
\ad \ \ = -1.2(x-0.05y)^2 + 2\cdot 0.145xy - 0.0145 y^2 + 0.09x^2\\
\ad \ \ \le  -1.2(x-0.05y)^2 + 0.145x^2 + 0.145y^2 - 0.0145 y^2 + 0.09x^2\\
\ad \ \ = -1.2(x-0.05y)^2 + 0.235x^2 + 0.1305y^2
\earray
\eeq
and
\beq{yy.5}
\barray
\ad 2 \big(x-G(y, 2)\big) f(x, y, 2)  + |g(x, y, 2)|^2 + \sum\limits_{j\in \M} q_{2j} \dfrac{p_j}{p_2} |x-G(y, 2)|^2\\
\ad \ \ = 2(x-0.1y)(0.2x+0.1y) + |0.2\sin y|^2 +1.5(x-0.1y)^2 -3(x-0.1y)^2 \\
\ad \ \ \le 2(x-0.1y)(0.2x -0.02y  +0.12y) + 0.04y^2 -1.5 (x-0.1y)^2\\
\ad \ \ = 0.4(x-0.1y)^2 + 0.24 xy - 0.024 y^2 + 0.04y^2-1.5 (x-0.1y)^2\\
\ad \ \ \le  -1.1(x-0.1y)^2 + 0.12x^2 + 0.12y^2 - 0.024 y^2 + 0.04y^2\\
\ad \ \ = -1.1(x-0.1y)^2 + 0.12x^2 + 0.136y^2.
\earray
\eeq
In view of \eqref{yy.4} and \eqref{yy.5}, assumption (H) is satisfied with the parameters
$$p_1=0.5, \ \ p_2=1, \ \ c_0=1.1, \ \ c_1 = 0.235, \ \ c_2= 0.136.$$
Hence, we calculate $r_p=\max\{\frac{p_2}{p_1}, \frac{p_1}{p_2}\}=2$. Then
$$-c_0 +\dfrac{r_p}{(1-\ka)^2}(c_1+ c_2)=-1.1 + \dfrac{2 (0.235 + 0.136)}{(1-0.1)^2}<0;$$
that is, condition \eqref{ee.3} is satisfied.
By virtue of Theorem \ref{thm:1}, we conclude that Eq. \eqref{yy.1} is mean-square exponentially stable. Furthermore, Theorem \ref{thm:2} implies that for a sufficiently small step size $\e$, the EM approximations of Eq. \eqref{yy.1} preserve this stability.

        \begin{figure}[htbp!]
     \centering
  \includegraphics[scale=0.6]{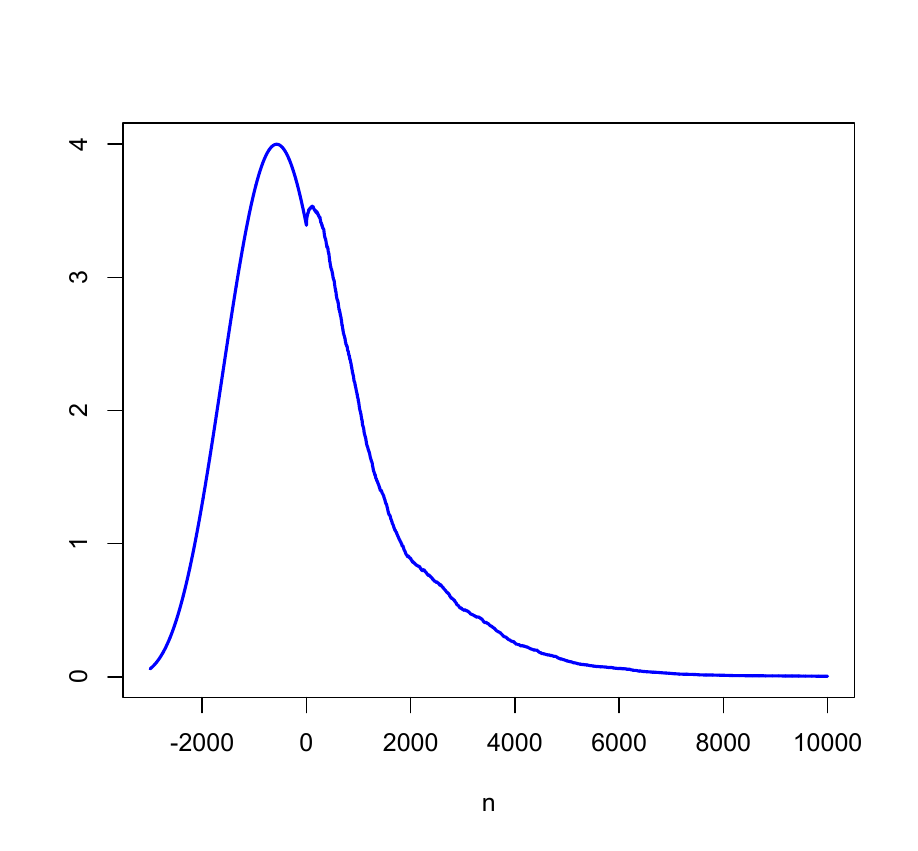}
   \caption{
  Graph of $\{\E|Y^{\xi, i_0}_n|^2\}$ for the EM approximation $\{Y^{\xi, i_0}_n\}$ of
  Eq. \eqref{yy.1}. The expectation is estimated by averaging over 100 simulations.
   }        \label{Fig3}
     \end{figure}

     \begin{figure}
         \centering    \includegraphics[scale=0.6]{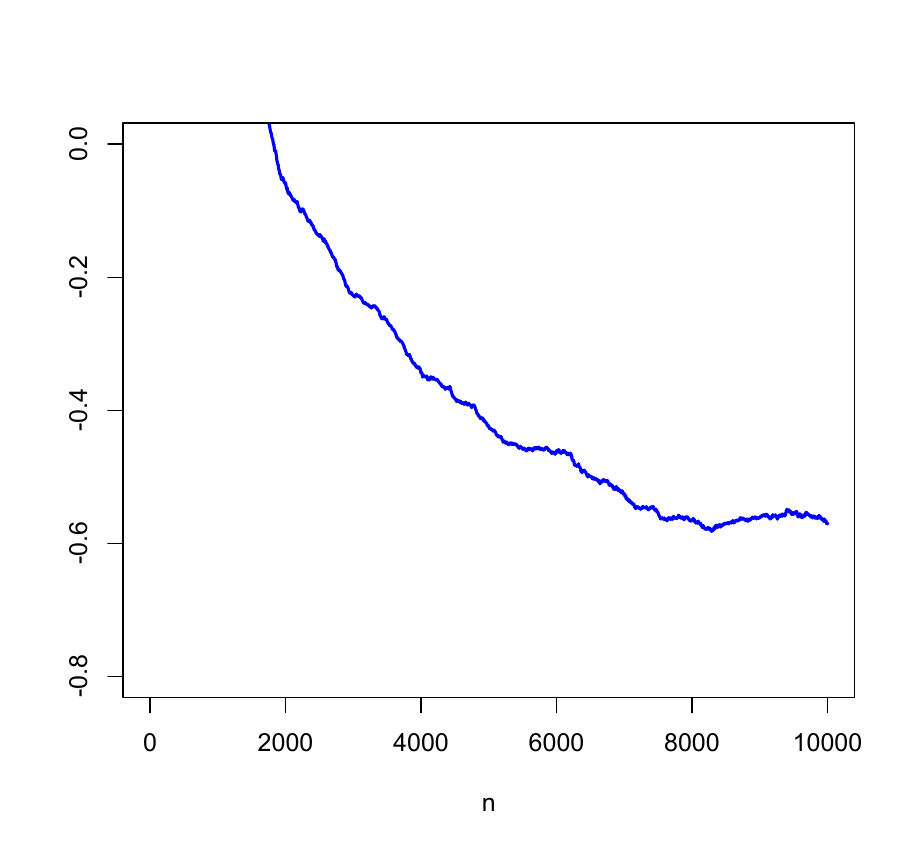}
         \caption{
         Graph of $\big\{ \frac{1}{n\e}\ln\big(\E|Y^{\xi, i_0}_n|^2\big)\big\}$ for the EM approximation $\{Y^{\xi, i_0}_n\}$ of Eq. \eqref{yy.1}. The expectation is estimated by averaging over 100 simulations.
         }        \label{Fig4}
\end{figure}

Utilizing the EM approximation method with step size $\e=0.001$, initial condition $\xi(t)=1+\sin (1-t)$ for $t\in [-3, 0]$, and $i_0=2$, we present the graph of the sequence $\{\E|Y^{\xi, i_0}_n|^2\}$ in Figure \ref{Fig3}. 
(The expectation is approximated by averaging over 100 simulations). We verify that the chosen step size $\e = 0.001$ satisfies the selection guidelines outlined in Remark \ref{rem:eps}.
To further illustrate the behavior, the corresponding graph for the sequence $ \big\{\frac{1}{n\e}\ln \big(\E|Y^{\xi, i_0}_n|^2\big)\big\}$ is shown in Figure \ref{Fig4}. These graphical representations are consistent with our theoretical analysis, demonstrating that $\E|Y^{\xi, i_0}_n|^2$ exponentially converges to zero as $n$ increases.

This example serves to highlight the stabilization effect of the Markovian switching component. Although the subsystem \eqref{yy.3} appears unstable in isolation, the introduction of Markovian switching results in a system \eqref{yy.1} that is mean-square exponentially stable.}

\end{exm}

\section{Concluding remarks} \label{sec:con}

In this paper, we have investigated the mean-square exponential stability of NSDDEs with Markovian switching.  
The essence of our contribution lies in the development of a novel comparison principle via contradiction for NSDDEs with Markovian switching.
By employing this approach, we have established stability criteria for both exact solutions and Euler-Maruyama approximations without imposing restrictive differentiability conditions on the time-varying delay. Consequently, our results cover a broader class of delay functions than those typically found in the existing literature, demonstrating that the numerical scheme can faithfully reproduce the stability and decay rate of the underlying system.

This work opens up several avenues for future research. For instance, our stability analysis could likely be extended to other numerical methods, such as the theta-approximations presented in \cite{Ky2}. Although this paper focused on NSDDEs with a single time-delay, the methodology developed herein appears applicable to more general settings. We believe it can be adapted to treat NSDDEs with multiple delays or even more general neutral stochastic functional differential equations.
 Furthermore, it would be interesting and challenging to apply our techniques to analyze the stability of truncated Euler–Maruyama approximations for NSDDEs and SDDEs, as developed in \cite{Guo2018}. Addressing these problems effectively may require the development of new analytic tools, and we plan to explore these research directions in our future work.

\end{document}